\newcommand{\F}{\mathbb F}
\newcommand{\T}{\mathbb T}
\newcommand{\V}{\mathbb V}
\newcommand{\mc}{\mathcal}
\newcommand{\ci}{
\begin{picture}(6,6)
\put(3,3
){\circle{3}}
\end{picture}}
\newcommand{\cim}{
\begin{picture}(6,6)
\put(3,3){\circle*{3}}
\end{picture}}
\newcommand{\lin}{\xymatrix@C=17pt@1{{}\ar@{-}[r]&{}}}
\DeclareMathOperator{\rank}{rank}
\DeclareMathOperator{\diag}{diag}
\renewcommand{\le}{\leqslant}
\renewcommand{\ge}{\geqslant}
\renewcommand{\u}{\underline}
\renewcommand{\v}{\underrightarrow}
\renewcommand{\o}{\bar}
\newcommand{\too}%
{\xrightarrow{\text{\raisebox{-3pt}{$\sim$}}\,}}
\newtheorem{theorem}{Theorem}[section]
\newtheorem{lemma}{Lemma}[section]
\newtheorem{corollary}{Corollary}[section]
\theoremstyle{definition}
\newtheorem{definition}{Definition}[section]
\newtheorem{example}{Example}[section]
\theoremstyle{remark}
\newtheorem{remark}{Remark}[section]
\begin{document}
\title{Wildness for tensors}

\author[fut]{Vyacheslav Futorny}
\ead{futorny@ime.usp.br}

\address[fut]{Department of Mathematics, University of S\~ao Paulo, Brazil}

\author[gro]{Joshua A. Grochow}
\ead{jgrochow@colorado.edu}

\address[gro]{Departments of Computer Science and Mathematics,
University of Colorado Boulder,
Boulder, CO, USA}

\author[ser]{Vladimir V.
Sergeichuk\corref{cor}}
\ead{sergeich@imath.kiev.ua}
\address[ser]{Institute of Mathematics,
Kiev, Ukraine}
\cortext[cor]{Corresponding author.}

\begin{abstract}
In representation theory, a classification problem is called wild if it contains the problem of classifying matrix pairs up to simultaneous similarity. The latter problem is considered hopeless; it contains the problem of classifying an arbitrary finite system of vector spaces and linear mappings between them. We prove that an analogous ``universal'' problem in the theory of tensors of order at most 3 over an arbitrary field is the problem of classifying three-dimensional arrays up to
equivalence transformations
\[
[a_{ijk}]_{i=1}^{m}\,{}_{j=1}^{n}\,{}_{k=1}^{t}\ \mapsto\ \Bigl[
\sum_{i,j,k}
a_{ijk}u_{ii'}
v_{jj'}w_{kk'}\Bigr]{}_{i'=1}^{m}\,{}_{j'=1}^{n}\,{}_{k'=1}^{t}
\]
in which $[u_{ii'}]$, $[v_{jj'}]$, $[w_{kk'}]$ are nonsingular matrices: this problem contains the problem of classifying
an arbitrary system of tensors of order at
most three.

\end{abstract}
\begin{keyword}
Wild matrix problems \sep Systems of tensors\sep Three-dimensional arrays

\MSC 16G60 \sep 15A72\sep 15A69\sep 47A07
\end{keyword}

% MSC 47A07 Forms (bilinear, sesquilinear, multilinear)

\maketitle

\section{Introduction and main result}
\label{ss1}

We prove that
\begin{equation}\label{zzzz}
\parbox[c]{0.8\textwidth}{the problem of classifying
three-dimensional arrays up to
equivalence transformations
\[
[a_{ijk}]_{i=1}^{m}\,{}_{j=1}^{n}\,{}_{k=1}^{t}\ \mapsto\ \Bigl[
\sum_{i,j,k}
a_{ijk}u_{ii'}
v_{jj'}w_{kk'}\Bigr]{}_{i'=1}^{m}\,{}_{j'=1}^{n}\,{}_{k'=1}^{t}
\]
in which $[u_{ii'}]$, $[v_{jj'}]$, $[w_{kk'}]$ are nonsingular matrices}
\end{equation}
``contains''
the problem of classifying an
arbitrary system of tensors of order at
most three; which means that the solution of the second problem can be derived from the solution of the first (see Definition \ref{cbd} of the notion ``contains'').

In some precise, the problem of classifying matrix pairs up to simultaneous similarity contains all classification problems for systems of linear mappings (see Section \ref{sect}). We show that \eqref{zzzz} is an analogous universal problem for systems of tensors of order at most three.

We are essentially concerned with systems of arrays. However, the main theorem is formulated in  Section \ref{ssw} in terms of systems of tensors that are considered
as \emph{representations  of directed bipartite graphs} (i.e., directed graphs in which the set of
vertices is partitioned into two
subsets and all the arrows are between
these subsets).
The vertices $t_1,\dots,t_p$ on the left represent tensors and the vertices $1,\dots,q$ on the right represent vector spaces. 

For
example, a representation of the graph
\begin{equation*}
\begin{split}
\hspace{-3.5cm}{\xymatrix%
@C=95pt@R=5pt{
 &{t_1}\ar@/^/@<0.3ex>[dr]&
  \\{G:}\hspace{-4cm} &{t_2}\ar@/_/@<-0.3ex>[r]
  &{1}
  \ar@/_/@<-0.3ex>[l]
  \ar[l]\ar@/_/[ld]
 \\&{t_3}\ar@/_/@<-0.3ex>[r]
 &{2}\ar@{<-}[l]
  }}
  \end{split}
\end{equation*}
is a system
\begin{equation*}
\begin{split}
\hspace{-3.5cm}{\xymatrix@C=95pt@R=5pt{
 &{T_1}\ar@/^/@<0.3ex>[dr]&
  \\{\mc A:}\hspace{-4cm}&{T_2}\ar@/_/@<-0.3ex>[r]
  &{V_1}
  \ar@/_/@<-0.3ex>[l]
  \ar[l]\ar@/_/[ld]
 \\&{T_3}\ar@/_/@<-0.3ex>[r]
 &{V_2}\ar@{<-}[l]
  }}
  \end{split}
\end{equation*}
of vector spaces $V_1$ and $V_2$ over a field $\F$ and tensors
\[
T_1\in V_1^*,\quad T_2\in V_1\otimes V_1
\otimes V_1^*,\quad T_3\in V_1\otimes V_2^*\otimes V_2^*,
\]
in which $V^*$ denotes the dual space of $V$.

The \emph{dimension} of a representation is the vector $(\dim V_1,\dots,\dim V_q)$.
Two representations $\mc A$ and $\mc A'$ of the same dimension are \emph{isomorphic} if $\mc A$ is transformed to $\mc A'$ by a system of linear bijections $V_1\to V_1',\dots,V_q\to V_q'$.
All representations of dimension $\underline n:=(n_1,\dots,n_q)$ are isomorphic to representations with
$V_1=\mathbb F^{n_1},\dots,V_q=\mathbb F^{n_q}$, whose sequences $(T_1,\dots,T_p)$ of tensors  are given by sequences $A=(A_1,\dots,A_p)$ of arrays over $\mathbb F$.
These sequences of arrays form the vector space, which we denote by $\mathcal W_{\underline n}(G)$.

Let $G$ be a directed bipartite graph, in which each vertex has  at most  $3$ arrows (and so each representation of $G$ consists of tensors of order  at most  $3$). The aim of this paper is to construct
an affine injection $F: \mc W_{\underline n}(G)\to\F^{m_1\times m_2\times m_3}$ with the following property:
\emph{two representations $A,A'\in\mc W_{\underline n}(G)$ are isomorphic if and only if the array $F(A)$ is transformed to $F(A')$ by equivalence transformations \eqref{zzzz}} (see Theorem \ref{ktw1}).

Note that the problem of classifying tensors of order $3$ is motivated from seemingly independent questions in mathematics, physics, and computational complexity. Each finite dimensional algebra is given by a $(1,2)$-tensor; see Example \ref{mkw}. In computer science, this problem plays a role in algorithms for testing isomorphism of finite groups \cite{BMW}, algorithms for testing polynomial identities  \cite{AGLOW,IQS1, IQS2}, and understanding the boundary of the determinant orbit closure \cite{LMR, grochowPhD}. This problem also arises in the classification of quantum entangled states, which has applications in physics and quantum computing  \cite{jozsaLinden,kendonMunro,WGE}.

All arrays and tensors that we consider are over an arbitrary field $\mathbb F$.

\subsection{Wild problems}\label{sect}

Our paper was motivated by the theory of wild matrix problems; in this section we recall some known facts.

A classification problem over a field $\mathbb F$  is called
\emph{wild} if it contains
\begin{equation}\label{yhn}
\parbox[c]{0.8\textwidth}{the problem
of classifying pairs $(A,B)$ of square matrices of the same size over $\mathbb F$ up to transformations of
simultaneous similarity
$(S^{-1}AS,S^{-1}BS)$, in which
$S$ is a nonsingular matrix;}
\end{equation}
see formal definitions in  \cite{dro}, \cite{gnrs}, and \cite[Section 14.10]{gab-roi}.

Gelfand and Ponomarev \cite{gel-pon}
proved that  the problem \eqref{yhn} (and even the problem  of classifying
pairs $(A,B)$ of
commuting nilpotent matrices up to
simultaneous similarity) contains the
problem of classifying $t$-tuples of square matrices of the same size up to transformations of simultaneous similarity
\[
(M_1,\dots,M_t)\mapsto (C^{-1}M_1C,\dots,C^{-1}M_tC),\qquad
C\text{ is nonsingular}.
\]

\begin{example}\label{fsq}
Gelfand and Ponomarev's statement, but without the condition of commutativity of $A$ and $B$, is easily proved.
For each $t$-tuple $\mc M=(M_1,\dots,M_t)$ of $m\times m$ matrices, we define two  $(t+1)m\times(t+1)m$ nilpotent matrices
\[
A:=\begin{bmatrix}
  0 & I_m&&0 \\ &0&\ddots\\&&\ddots&I_m\\
  0 & &&0 \\
\end{bmatrix},\qquad
B(\mc M):=\begin{bmatrix}
  0 & M_1&&0 \\ &0&\ddots\\&&\ddots&M_t\\
  0 & &&0 \\
\end{bmatrix}.
\]
Let $\mc N=(N_1,\dots,N_t)$ be another $t$-tuple of $m\times m$ matrices.
Then $\mc M$ and $\mc N$ are similar if and only if $(A,B(\mc M))$ and $(A,B(\mc N))$ are similar. Indeed, let  $(A,B(\mc M))$ and $(A,B(\mc N))$ be similar; that is,
\begin{equation}\label{6r6}
AS=SA,\qquad B(\mc M)S=SB(\mc N)
\end{equation}
for a nonsingular $S$.  The first equality in \eqref{6r6} implies that $S$ has an upper block-triangular form
\[
S=\begin{bmatrix}
  C &&&* \\ &C\\&&\ddots\\
  0 & &&C \\
\end{bmatrix},\qquad C\text{ is }m\times m.
\]
Then the second equality  in \eqref{6r6} implies that $
M_1C=CN_1,\dots, M_tC=CN_t$. Therefore, $\mc M$ is similar to $\mc N$. Conversely, if $\mc M$ is similar to $\mc N$ via $C$, then $(A,B(\mc M))$ is similar to $(A,B(\mc N))$ via $\diag(C,\dots,C)$.
\end{example}

\begin{example}\label{fiq}
The problem of classifying pairs $(M,N)$ of $m\times n$ and $m\times m$ matrices up to transformations  \begin{equation}\label{nf}
(M,N)\mapsto (C^{-1}MR,C^{-1}NC),\qquad
C\text{ and $R$ are nonsingular,}
\end{equation}
looks simpler than the problem of classifying matrix pairs up to similarity since \eqref{nf} has additional admissible transformations. However, these problems have the same complexity since for each two pairs $(A,B)$ and $(A',B')$ of $n\times n$ matrices the pair
\[
\left(
\begin{bmatrix}
  I_n\\0_n \\
\end{bmatrix},
\begin{bmatrix}
  0_n &A\\I_n&B \\
\end{bmatrix}
\right)\text{ is reduced to }
\left(
\begin{bmatrix}
  I_n\\0_n \\
\end{bmatrix},
\begin{bmatrix}
  0_n &A'\\I_n&B' \\
\end{bmatrix}
\right)
\]
by transformations \eqref{nf}  if and only if $(A,B)$ is similar to $(A',B')$.
\end{example}

Moreover, by
\cite{bel-ser_compl} the problem \eqref{yhn} contains the
problem of classifying representations
of an arbitrary quiver  over a field $\mathbb F$ (i.e., of an
arbitrary finite set of vector spaces  over $\mathbb F$
and linear mappings between them) and  the
problem of classifying
representations of an arbitrary
partially ordered set. Analogously, by \cite{debora}
the problem of classifying  pairs $(A,B)$ of commuting complex matrices of the same size up to transformations of simultaneous consimilarity
$(\bar S^{-1}AS,\bar S^{-1}BS)$, in which
$S$ is nonsingular, contains the problem of classifying an
arbitrary finite set of complex vector spaces
and linear or semilinear mappings between them.

Thus, all wild classification problems for
systems of linear mappings have the
same complexity and a solution of any one
of them would imply a solution of every
wild or non-wild problem.

The universal role of the problem \eqref{yhn} is not extended to systems of tensors: Belitskii and Sergeichuk
\cite{bel-ser_compl} proved that the problem \eqref{yhn} is contained
in the problem of classifying
three-dimensional arrays up to
equivalence but does not contain it.

\subsection{Organization of the paper}

The main theorem is formulated in Section \ref{ssw}. Its proof is given in Sections \ref{ss2}--\ref{sss4}, in which we successively  prove special cases of the main theorem. We describe them in this section.

\begin{definition}\label{lke}
 An \emph{array of size
$d_1\times\dots\times d_r$} over a
field $\F$ is an indexed
collection $\u A=[a_{i_1\dots
i_r}]_{i_1=1}^{d_1}{}\dots{}_{i_r=1}^{d_r}$
of elements of $\F$. (We denote arrays by underlined capital letters.)
Let $\u A=[a_{i_1\dots
i_r}]$ and $\u
B=[b_{i_1\dots i_r}]$ be two arrays of size
$d_1\times\dots\times d_r$ over a
field $\F$. If there exist
nonsingular matrices
$S_1=[s_{1ij}]\in\F^{d_1\times d_1}$, \dots, $S_r=[s_{rij}]\in\F^{d_r\times d_r}$
such that
\begin{equation}\label{keje}
b_{j_1\dots j_r}=\sum_{i_1,\dots, i_r}a_{i_1\dots i_r}s_{1i_1j_1}
\dots s_{ri_rj_r}
\end{equation}
for all ${j_1,\dots ,j_r}$,
then we say that $\u A$ and $\u B$ are
\emph{equivalent} and write
\begin{equation}\label{mkt}
(S_1,\dots,S_r): \u A \too \u
B.
\end{equation}
\end{definition}

We define partitioned three-dimensional arrays by analogy with block matrices as follows.
Let $\u A=[a_{ijk}]_{i=1}
^{m}\,{}_{j=1}^{n}\,{}_{k=1}^{t}
$ be an array of size $m\times n\times t$.
Each partition of its index sets
\begin{equation}\label{1jkk}
 \left.
 \begin{aligned}
\{1,\dots,m\}&=\{1,\dots,i_1\,|
\,i_1+1,\dots,i_2\,
|\,\dots\,|
\,i_{\o m-1}+1,\dots,m\}
        \\
\{1,\dots,n\}&=\{1,\dots,j_1\,|
\,j_1+1,\dots,j_2\,
|\,\dots\,|\,
j_{\o n-1}+1,\dots,n\}
           \\
\{1,\dots,t\}&=\{1,\dots,k_1\,|
\,k_1+1,\dots,k_2\,
|\,\dots\,|
\,k_{\o t -1}+1,\dots,t\}
\end{aligned}
\right\}
\end{equation}
(we set $i_0=j_0=k_0:=0$, $i_{\o m}:=m$, $j_{\o n}:=n$, and $k_{\o t}:=t$) defines the \emph{partitioned array}
\begin{equation*}\label{vfj} \u A=[\u
A_{\alpha \beta\gamma}]_{\alpha
=1}^{\o m}\,{}_{\beta=1}^{\o
n}\,{}_{\gamma=1}^{\o t}
\end{equation*}
\emph{with $\o m\cdot \o
n\cdot\o t$ spatial blocks}
\[
\u A_{\alpha \beta\gamma}:=[a_{ijk}]
_{i=i_{\alpha-1}+1}
^{i_{\alpha}}{}\
_{j=j_{\beta-1}+1}
^{j_{\beta}}{}\
_{k=k_{\gamma-1}+1}
^{k_{\gamma}}.
\]
Thus, $\u A$ is
partitioned into spatial blocks $\u
A_{\alpha \beta \gamma }$ by
frontal, lateral, and horizontal
planes.

Two partitioned arrays
\begin{equation}\label{dsj}
\u A=[\u
A_{\alpha \beta\gamma}]_{\alpha
=1}^{\o m}\,{}_{\beta=1}^{\o
n}\,{}_{\gamma=1}^{\o t}
         \quad\text{and}\quad
\u B=[\u
B_{\alpha \beta\gamma}]_{\alpha
=1}^{\o m}\,{}_{\beta=1}^{\o
n}\,{}_{\gamma=1}^{\o t}
\end{equation}
of the same size are \emph{conformally partitioned} if the sizes of the space blocks $\u
A_{\alpha \beta\gamma}$ and $\u
B_{\alpha \beta\gamma}$ are equal for each $\alpha, \beta,\gamma$.

Two conformally partitioned three-dimensional arrays
\eqref{dsj} whose partition is given by \eqref{1jkk}, are \emph{block-equivalent}
if there exists an equivalence
$(S_1,S_2,S_3): \u A \too \u
B$ (see
 \eqref{mkt}) in which
\begin{equation}\label{nmr}
S=S_{11}\oplus \dots\oplus S_{1\o m},\quad
S_2=S_{21}\oplus \dots\oplus S_{2\o n},
 \quad
S_3=S_{31}\oplus \dots\oplus S_{3\o t}
\end{equation}
and the sizes of diagonal blocks in \eqref{nmr} are given by \eqref{1jkk}.

In Section \ref{ss2} we prove Theorem \ref{hep}, which implies that
\begin{equation}\label{kjt1}
\parbox[c]{0.8\textwidth}
{the problem \eqref{zzzz} contains the problem of classifying partitioned
three-dimensional arrays up to
block-equivalence.}
\end{equation}
Theorem \ref{hep} is our main tool in the proof of Theorem \ref{ktw1}.

In Section \ref{sss3}, we prove Corollary \ref{dek12}, which implies that
\begin{equation}\label{kjt2}
\parbox[c]{0.8\textwidth}
{for an arbitrary $t$, the problem \eqref{zzzz} contains the problem of classifying $t$-tuples of three-dimensional arrays up to
simultaneous equivalence,}
\end{equation}
which is a three-dimensional analogue of Gelfand and Ponomarev's statement from  \cite{gel-pon} about the problem \eqref{yhn}.

In Section \ref{sss4}, we consider \emph{linked block-equivalence transformations} of three-dimensional arrays; that is, block-equivalence transformations $(S_1,S_2,S_3): \u A \too \u
B$ of the form \eqref{nmr}, in which some of the diagonal blocks are claimed to be equal ($S_{ij}=S_{i'j'}$) and some of the diagonal blocks are claimed to be mutually contragredient ($S_{ij}=(S_{i'j'}^{-1})^T$). We prove Theorem \ref{kts}, which implies that
\begin{equation}\label{kjj}
\parbox[c]{0.8\textwidth}
{the problem \eqref{zzzz} contains the problem of classifying partitioned
three-dimensional arrays up to linked
block-equivalence.
}
\end{equation}

The main result of the article is Theorem \ref{ktw1}, which generalizes \eqref{kjt1}--\eqref{kjj} and means that
\begin{equation}\label{kjt3}
\parbox[c]{0.8\textwidth}
{\emph{the problem \eqref{zzzz} contains  the problem of classifying an arbitrary system of tensors of order at
most 3.
}}
\end{equation}

Note that the second problem in \eqref{kjt3} contains both the problem of classifying systems of linear mappings and bilinear forms (i.e., representations of mixed graphs)   and the problem of classifying finite dimensional algebras; see \cite{hor-ser,ser_izv} and Example \ref{err}.

    \begin{remark}
    Because of the potential applications in computational complexity, we remark that all of the containments we construct are easily seen to be uniform $p$-projections in the sense of Valiant \cite{valiant}. In this way, our containments not only show that mathematically one classification problem contains another, but also that this holds in an effective, computational sense. In particular, a polynomial-time algorithm for testing equivalence of three-dimensional arrays would yield a polynomial-time algorithm for all the other problems considered in this paper. (Perhaps the only caveat to be aware of is that for partitioned arrays with $t$ parts, the reduction is polynomial in the size of the array and $2^t$.)
    \end{remark}

\subsection{Main theorem}
\label{ssw}

All systems of tensors of fixed orders $\le 3$ form a vector space.  We construct in Theorem \ref{ktw1} an embedding of this vector space into the vector space of three-dimensional arrays of a fixed size (the image of this embedding is an affine space) is such a way that \emph{two systems of tensors are isomorphic if and only if their images are equivalent arrays}; see Remark \ref{rema}.

An array $\u A=[a_{i_1\dots
i_r}]_{i_1=1}^{d_1}{}\dots{}_{i_r=1}^{d_r}$ is a \emph{subarray} of an array $\u B=[b_{j_1\dots
j_{\rho}}]_{j_1=1}^{{\delta}_1}{}
\dots{}_{j_{\rho}=1}^{{\delta}_{\rho}}$ if $r\le \rho $ and there are nonempty (possible, one-element) subsets $$J_1\subset\{1,\dots,{\delta}_1\},\ \dots,\ J_{\rho }\subset\{1,\dots,{\delta}_{\rho }\}$$ such that $\u A$ coincides with $[b_{j_1\dots
j_{\rho}}]_{j_1\in J_1
,\dots, j_{\rho}\in J_{\rho}}$ up to deleting the indices $j_k$ with one-element $J_k$. The \emph{size of a $p$-tuple $\mc A=(\u A_1,\dots,\u A_p)$ of arrays}  is the sequence $d:=(\u d_1,\dots,\u d_p)$ of their sizes. Each array classification problem $\mc C_d$ that we consider is given by a set of $\mc C_d$-admissible transformations on the set of array $p$-tuples of size $d$.

We use the following definition of embedding of one classification problem about systems of aggregates to another, which generalizes the constructions from Examples \ref{fsq} and \ref{fiq}. This definition is general; its concrete realization is given in Theorem \ref{ktw1}.

 \begin{definition}\label{cbd}
Let ${\cal X}=(\u X_1,\dots,\u X_p)$ be a variable array $p$-tuple of size $d$, in which the entries of $\u X_1,\dots,\u X_p$ are independent variables (without repetition). We say that \emph{an array classification problem $\mc C_d$ is contained in an array classification problem $\mc D_{\delta }$} if there is a variable array $\pi $-tuple $\mc F(\mc X)=(\u F_1,\dots, \u F_{\pi})$ of size $\delta $, in which every $\u X_i$ is a subarray of some $\u F_j$ and each entry of $\u F_1,\dots, \u F_{\pi}$ outside of $\u X_1,\dots,\u X_{p}$ is $0$ or $1$, such that
\begin{quote}
    $\mc A$ is reduced to $\mc B$ by $\mc C_d$-admissible transformations if and only if
$\mc F(\mc A)$ is reduced to $\mc F(\mc B)$ by $\mc D_{\delta }$-admissible transformations
\end{quote}
 for all array $p$-tuples  $\mc A$ and $\mc B$ of size $d$.
 \end{definition}

Note that $\mc F(\mc X)$ defines an affine map $\mc A\mapsto \mc F(\mc A)$ of the vector space of all array $p$-tuples of size $d$
to the vector space of all array $\pi $-tuples of size $\delta $.

Gabriel \cite{gab} (see also
\cite{gab-roi,hor-ser}) suggested to
consider systems of vector spaces and
linear mappings as representations of
quivers: a quiver is a directed graph;
its representation is given by
assigning a vector space to each vertex
and a linear mapping of the corresponding vector spaces to each arrow.
Generalizing this notion, Sergeichuk
\cite{ser_sur} suggested to study
systems of tensors as
representations of directed bipartite
graphs. These representations are
another form of \emph{Penrose's tensor
diagrams} \cite{pen}, which are studied
in \cite{cvi,tur}.

A \emph{directed bipartite graph $G$}
is a directed graph in which the set of
vertices is partitioned into two
subsets and all the arrows are between
these subsets. We denote these subsets
by ${\T}$ and $\V$, and write the
vertices from $\T$ on the left and the
vertices from $\V$ on the right. For
example,
\begin{equation}\label{hle1}
\begin{split}
{\xymatrix@C=95pt@R=5pt{
 {t_1}\ar@/^/@<0.3ex>[dr]&
  \\{t_2}\ar@/_/@<-0.3ex>[r]
  &{1}
  \ar@/_/@<-0.3ex>[l]
  \ar[l]\ar@/_/[ld]
 \\{t_3}\ar@/_/@<-0.3ex>[r]
 &{2}\ar[l]
  }}
  \end{split}
\end{equation}
is a directed bipartite graph,
in which $\T=\{t_1,t_2,t_3\}$ and $\V=\{1,2\}$.

The following definition of
representations of directed bipartite
graphs is given in terms of arrays. We show in Section \ref{hyu} that it is
equivalent to the
definition from \cite{ser_sur} given in terms of tensors that are considered as elements of tensor products.

\begin{definition}\label{aal}
Let $G$ be a directed bipartite
graph with $\T=\{t_1,\dots,t_p\}$ and $\V=\{1,\dots,q\}$.
\begin{itemize}
  \item An
      \emph{array-representation}
      $\cal A$ of $G$ is given by assigning
\begin{itemize}
  \item
a nonnegative integer number
$d_v$
to
each $v\in \V$, and
  \item
an array $\u A_{\alpha }$ of size
$d_{v_1}\times\dots\times
d_{v_k}$\footnote{For each sequence of nonnegative integers
      $d_1,\dots,d_q$ with
      $\min\{d_1,\dots,d_q\}=0$,
      there is exactly one array of
      size $d_1\times\dots\times d_q$. In particular, the ``empty''
      matrices of sizes $0\times n$
      and $m\times 0$ give
      the linear mappings $\F^n\to
      0$ and $0\to \F^m$.}
to each $t_{\alpha }\in \T$
      with arrows $\lambda_1,\dots,\lambda _k$ of the form
\begin{equation}\label{liw1}
\begin{split}
\xymatrix@R=-3pt@C=2cm
{
&v_1\\
&\\
&v_2\\
t_{\alpha }\ar@/^/@{-}[ruuu]^(0.6){\lambda_1}
\ar@{-}[ru]^(0.6){\lambda_2}
\ar@{}[r]
\ar@/_/@{-}[rdd]^(0.6)
{\lambda_k}
&\text{\raisebox{-3.5pt}{$\vdots$}}\\
&\\
&v_k\\
}
\end{split}
\end{equation}
(each line $\lin$ is
$\longrightarrow$ or
$\longleftarrow$ and some of
$v_1,\dots,v_k$ may coincide).
\end{itemize}
The vector $d=(d_1,\dots,d_q)$ is the \emph{dimension} of $\cal A$.
(For
      example, an
      array-representation $\cal A$ of
      \eqref{hle1} of dimension $d=(d_1,d_2)$
      is given by three arrays $\u A_1$, $\u A_2$, and $\u A_3$ of
      sizes $d_1$, $d_1\times
      d_1\times d_1$, and $d_1\times
      d_2\times d_2$.)

  \item We say that two
      array-representations ${\cal
      A}=(\u A_1,\dots,\u A_p)$ and
      ${\cal B}=(\u B_1,\dots,\u
      B_p)$ of $G$ of the same dimension $d=(d_1,\dots,d_q)$ are
      \emph{isomorphic} and write
${\cal S}:{\cal A}\too {\cal B}$
(or ${\cal A}\simeq {\cal B}$ for
short)
      if there
      exists a sequence
      ${\cal S}:=(S_1,\dots,S_q)$ of
      nonsingular matrices of sizes
      $d_1\times
      d_1,\dots,d_q\times  d_q$
      such that
\begin{equation}\label{mbv1}
(S_{v_1}^{\tau_1},\dots, S_{v_k}^{\tau_k}):\u A_{\alpha }\too \u B_{\alpha }
\end{equation}
(see \eqref{mkt}) for each $t_{\alpha }\in \T$ with arrows \eqref{liw1}, where      \[
\tau_i:=\left\{
                \begin{array}{ll}
                  1 & \hbox{if $\lambda_i: t_{\alpha}\longleftarrow  v_i$} \\
                  -T & \hbox{if $\lambda_i: t_{\alpha}\longrightarrow  v_i$}
                \end{array}
              \right.\quad
\text{for all $i=1,\dots,k,$}               \]
and $S^{-T}:=(S^{-1})^T$ (which is called the \emph{contragredient matrix} of $S$).
\end{itemize}
\end{definition}

\begin{example}\label{mkw}
\begin{itemize}
  \item Each array-representation of dimension $d=(d_1)$
      of
\begin{equation}\label{fdq}
\xymatrix{t\ar@/^/@{<-}[r]\ar@/_/@{<-}[r]&1}
\qquad\text{or}\qquad
\xymatrix{t\ar@/^/@{->}[r]\ar@/_/@{<-}[r]&1}
\end{equation}
is a $d_1\times d_1$ matrix $A=[a_{ij}]$, which
is isomorphic to an
array-representation $B=[b_{ij}]$
if and only if there exists a $d_1\times d_1$
nonsingular matrix $S=[s_{ij}]$
such that
\[
b_{i'j'}=\sum_{i,j} a_{ij}s_{ii'}s_{jj'}\qquad\text{or}\qquad
b_{i'j'}=\sum_{i,j} a_{ij}r_{ii'}s_{jj'},
\]
respectively, where $[r_{ii'}]:=S^{-T}$. Thus,
\[
B=S^TAS\qquad\text{or}\qquad B=S^{-1}AS,
\]
and so we can consider each
array-representation of
\eqref{fdq} as the matrix of a
\emph{bilinear form} or \emph{linear operator}, respectively.

  \item Each array-representation
      of dimension $d=(d_1)$
      of
\begin{equation}\label{fhdq}
\xymatrix@C=50pt{
{t}\ar@/_/@<-0.3ex>[r]
  &{1}\ar@/_/@<-0.3ex>[l]
  \ar[l]
  }
\end{equation}
is
a $d_1\times d_1\times d_1$ array $\u A=[a_{ijk}]$. It is
reduced by transformations
\[
(S,S,S^{-T}):\u A\mapsto \Big[\sum_{i,j,k} a_{ijk}s_{ii'}s_{jj'}
r_{kk'}\Big]_{i'j'k'},\qquad [r_{kk'}]:=S^{-T},
\]
in which $S=[s_{ij}]$ is a $d_1\times d_1$ nonsingular matrix.

  \item By \eqref{mbv1}, each array-representation of \eqref{liw1} defines an $(m,n)$-tensor (i.e., an $m$ times contravariant and $n$ times
covariant
tensor), where $m$ is the number of arrows $\longrightarrow$ and $n$ is the number of arrows $\longleftarrow$; see Section \ref{hyu}.
In particular, each array-representation of \eqref{fhdq} defines a $(1,2)$-tensor $T\in V^*\otimes
V^*\otimes
V$, with defines a multiplication
 in $V$ converting $V$ into a \emph{finite dimensional algebra}; see Example \ref{err}.
\end{itemize}
\noindent In Section \ref{sss3} we show that the problem \eqref{zzzz} contains the problems of classifying $(1,2)$-tensors and $(0,3)$-tensors.
\end{example}

Our main result is the following theorem (which ensures the statement \eqref{kjt3}); the other theorems are its special cases.

\begin{theorem}\label{ktw1}
Let $G$ be a
      directed bipartite graph with
      the set
      $\T=\{t_1,\dots,t_p\}$ of
      left vertices and the set
      $\V=\{1,\dots,q\}$ of right
      vertices, in which each left
      vertex has at most three
      arrows. Let
      $d=(d_1,\dots,d_q)$ be an
      arbitrary sequence of
      nonnegative integers, and let
      ${\cal X}=(\u X_1,\dots,\u
      X_p)$ be a variable
      array-representation  of $G$ of dimension $d$, in which the
      entries of arrays $\u X_1,\dots,\u
      X_p$ are independent variables.

      Then there
      exists a
      partitioned three-dimensional variable
      array $\u F({\cal X})$ in
      which
\begin{itemize}
  \item $p$ spatial blocks are $\u
      X_1,\dots,\u X_p$, and
  \item each entry of the other  spatial blocks is $0$ or $1$,
\end{itemize}
such that  two array-representations $\cal
A$ and $\cal B$  of dimension
$d$  of the graph $G$ over a field are isomorphic if and only if
\begin{equation}\label{kkt1}
\text{$\u F({\cal A})$ and $\u F({\cal B})$ are equivalent as unpartitioned arrays.}
\end{equation}
\end{theorem}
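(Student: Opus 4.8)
The plan is to assemble $\u F(\mc X)$ by encoding each of the three structural ingredients of a representation of $G$ into one big three‑dimensional array and then invoking the special cases already stated in the excerpt. Concretely, I would proceed in four stages. First, I reduce each individual tensor $\u X_\alpha$ (an array of order $\le 3$) to a genuine three‑dimensional array: a tensor of order $0$ or $1$ or $2$ is padded out to order $3$ by adjoining index slots of dimension $1$, and a tensor of order $3$ is taken as is. This is where the hypothesis ``each left vertex has at most three arrows'' is used essentially, and it is exactly the kind of padding permitted by the definition of \emph{subarray} in the excerpt. After this step every $\u X_\alpha$ lives in some $\F^{a_\alpha\times b_\alpha\times c_\alpha}$.

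Second, I glue the $p$ tensors together into a single partitioned array. The natural device is to place the $\u X_\alpha$ as spatial blocks of one large partitioned array $\u F_0(\mc X)$, with the remaining blocks filled by $0$'s and $1$'s (identity‑type blocks) chosen so that the admissible \emph{linked block‑equivalence} transformations of $\u F_0(\mc X)$ correspond precisely to the isomorphism relation \eqref{mbv1} on representations of $G$. Here the bookkeeping is: each right vertex $v\in\V$ of dimension $d_v$ should be assigned one ``diagonal block'' $S_v$ of the partition; each arrow $\lambda_i\colon t_\alpha\! -\! v_i$ dictates which of the three index slots of $\u X_\alpha$ is governed by $S_{v_i}$ and, via the sign $\tau_i$, whether it is governed by $S_{v_i}$ or by its contragredient $S_{v_i}^{-T}$. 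Equal labels force $S_{ij}=S_{i'j'}$ and a vertex appearing both as $\longrightarrow$ and as $\longleftarrow$ target forces the contragredient linkage $S_{ij}=(S_{i'j'}^{-1})^T$. So $\u F_0(\mc X)$ is exactly a partitioned array up to linked block‑equivalence, with the linkage pattern read off from $G$.

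Third, I invoke Theorem~\ref{kts} (equivalently the statement \eqref{kjj}): the problem \eqref{zzzz} contains the problem of classifying partitioned three‑dimensional arrays up to linked block‑equivalence. Applying the embedding furnished by that theorem to $\u F_0(\mc X)$ yields an \emph{unpartitioned} three‑dimensional variable array $\u F(\mc X)$, still having $\u X_1,\dots,\u X_p$ among its spatial blocks and all other entries $0$ or $1$, with the property that $\u F_0(\mc A)$ and $\u F_0(\mc B)$ are linked block‑equivalent iff $\u F(\mc A)$ and $\u F(\mc B)$ are equivalent as unpartitioned arrays. Composing the two reductions gives: $\mc A\simeq\mc B$ iff $\u F(\mc A)$ and $\u F(\mc B)$ are equivalent, which is \eqref{kkt1}. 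The fact that this composite $\mc F$ is still an affine injection of the appropriate form is immediate since both constituent maps are.

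The main obstacle — and where the real content lies — is the \emph{faithfulness} of Step~2: showing that the block‑triangular/identity padding around the $\u X_\alpha$ forces \emph{every} admissible linked block‑equivalence of $\u F_0(\mc X)$ to respect the intended block structure, i.e.\ to be block‑diagonal with the prescribed linkages, so that no ``extra'' equivalences are introduced that could make non‑isomorphic $\mc A,\mc B$ become equivalent. This is the same phenomenon seen in Example~\ref{fsq}, where the equality $AS=SA$ is what pins $S$ into block‑triangular shape; here the $0/1$ scaffolding plays that role, and the argument must be carried out slot by slot for each of the three index directions, using that the identity blocks have full rank. The converse direction (an isomorphism of representations induces an admissible transformation of $\u F_0$) is routine. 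I would also need to double‑check the edge cases where some $d_v=0$, using the footnote's convention on empty arrays, and confirm the uniformity/$p$‑projection remark, which follows because all the padding and the Theorem~\ref{kts} embedding are explicit.
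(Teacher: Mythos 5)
Your overall route---glue the $\u X_\alpha$ into one partitioned array with linkage relations $\sim,\Join$ read off from $G$, then invoke Theorem~\ref{kts} together with Theorem~\ref{hep}/Corollary~\ref{ddr} to pass from linked block-equivalence to plain equivalence---is the paper's route. Your Step~2 is in fact simpler than you make it: the paper just takes $\u K({\cal X})=\u X_1\oplus\dots\oplus\u X_p$ with zero off-diagonal blocks, and isomorphism of representations coincides with linked block-equivalence \emph{by definition} of the latter, so no identity scaffolding and no Example~\ref{fsq}-style faithfulness argument is needed at that stage; all of that work is already inside Theorems \ref{hep} and \ref{kts}, which you invoke anyway.

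The genuine gap is Step~1. Padding a left vertex that has fewer than three arrows by adjoining index slots of dimension $1$ does not preserve the classification problem: in the padded array each new slot is acted on by its own nonsingular $1\times 1$ matrix $[a]$, and nothing in your construction links $[a]$ to the matrices $S_v$ attached to the right vertices (the padded slot corresponds to no arrow of $G$, so your linkage rule is silent about it). Consequently a vertex with two arrows---say a $(0,2)$-tensor $A$ as in Example~\ref{mkw}---gets classified up to $B=a\,S^TAS$ instead of $B=S^TAS$; over $\mathbb R$ this already identifies the non-isomorphic forms $I_2$ and $-I_2$, so the implication from ``$\u F({\cal A})$ and $\u F({\cal B})$ equivalent'' back to ``${\cal A}\simeq{\cal B}$'' in \eqref{kkt1} fails for your $\u F$. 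This is precisely what Lemma~\ref{due} is for: each padded slot is realized as a new right vertex of dimension $1$, and one adjoins a new left vertex carrying the $1\times1\times1$ array $[1]$ with three arrows to that right vertex; an isomorphism then acts on this gadget by $([a],[a],[a]^{-T}):[1]\too[1]$, which forces $a=1$ (a vertex with a single arrow needs two such gadgets). You need this gadget, or an equivalent device pinning every padded scalar to $1$, before your Steps 2--3 can be run; with it added, the rest of your argument coincides with the paper's proof.
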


\begin{remark}\label{rema}
The variable array $\u F({\cal X})$ defines an embedding of the vector space of array-representations of dimension
$d$  of the graph $G$ into the vector space of three-dimensional arrays of some fixed size. This embedding satisfies Definition \ref{cbd} and its image (which consists of all $\u F({\cal A})$ with $\mc A$ of dimension $d$)  is an affine subspace. \emph{Two representations are isomorphic if and only if their images are equivalent.}
\end{remark}

\section{Proof of the statement \eqref{kjt1}} \label{ss2}

The statement \eqref{kjt1} is proved in the following theorem.

\begin{theorem}\label{hep}
For each partition \eqref{1jkk},
there exists a partitioned three-dimensional variable array $\u F({\u X})$ in which
\begin{itemize}
  \item[\rm(i)] one spatial block is an $m\times n\times t$ variable array
${\u X}$ whose entries  are independent variables, and
  \item[\rm(ii)] each entry of the other  spatial blocks is $0$ or $1$,
\end{itemize}
such that  two $m\times n\times t$ arrays $\u
A$ and $\u B$
partitioned into $\o m\cdot\o n\cdot\o t$ spatial blocks are block-equivalent if and only if $\u F({\u A})$ and $\u F({\u B})$ are equivalent.
\end{theorem}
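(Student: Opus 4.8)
The plan is to realize the block-equivalence of $\u A$ into an (unpartitioned) equivalence of a larger array by the classical "padding with identity blocks" trick, adapted from matrices (as in Examples~\ref{fsq} and~\ref{fiq}) to three dimensions. The central idea: given the partition \eqref{1jkk} with $\o m\cdot\o n\cdot\o t$ prescribed block sizes, I would build $\u F(\u X)$ as a partitioned array with one distinguished spatial block equal to $\u X$ and all other blocks filled with $0$'s and $1$'s chosen so that the only equivalences of $\u F(\u A)$ that exist are forced, via the $0/1$ blocks, to have their three transforming matrices $S_1,S_2,S_3$ in the block-diagonal form \eqref{nmr} with the right diagonal block sizes. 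Once that is arranged, restricting such an equivalence to the block containing $\u X$ gives exactly a block-equivalence of $\u A$, and conversely any block-equivalence extends (via the identities) to an equivalence of the padded arrays.

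First I would set up the "rigidifying gadget" along each of the three axes separately. Along the first axis, the index set $\{1,\dots,m\}$ is cut into $\o m$ pieces of sizes $\mu_1,\dots,\mu_{\o m}$; I want to add extra layers in the first coordinate direction so that any equivalence matrix $S_1$ must be block-upper-triangular with those block sizes on the diagonal, and then also block-lower-triangular, hence block-diagonal. The matrix analogue is exactly the nilpotent shift construction of Example~\ref{fsq}: pairs of identity/shift blocks force $S$ to be block upper triangular, and a symmetric copy forces the strictly-triangular part to vanish. The three-dimensional version is to place, in carefully chosen off-$\u X$ spatial blocks, slices that are (fattened) identity matrices and zero matrices arranged so that the commutation-type relations coming from \eqref{keje} pin down the shape of each $S_i$ one axis at a time; the three axes are handled independently because the gadget for axis $1$ can be made to live in layers that are "flat" (one-dimensional) in axes $2$ and $3$, and similarly for the others. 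I would first do the bookkeeping to see that slices of $\u F(\u X)$ orthogonal to a fixed axis behave like matrices under the two transformations in the remaining two axes, so the matrix lemmas apply verbatim.

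Next I would verify the two implications of the "if and only if." The easy direction: if $(S_{11}\oplus\cdots, S_{21}\oplus\cdots, S_{31}\oplus\cdots)$ is a block-equivalence $\u A\too\u B$, then the same block-diagonal triple (possibly enlarged by identity matrices on the padding layers) is an honest equivalence $\u F(\u A)\too\u F(\u B)$, because the $0/1$ padding blocks are fixed by block-diagonal transformations of the correct sizes. The substantive direction: suppose $(S_1,S_2,S_3)\colon \u F(\u A)\too\u F(\u B)$. Looking at the padded layers and the equations \eqref{keje} they satisfy, I deduce that $S_1,S_2,S_3$ must already be block-diagonal of the form \eqref{nmr}. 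Then the restriction of \eqref{keje} to the indices of the $\u X$-block says precisely $(S_{1\bullet},S_{2\bullet},S_{3\bullet})\colon\u A\too\u B$ is a block-equivalence. Finally, this yields \eqref{kjt1}: since \eqref{zzzz} is the classification of unpartitioned three-dimensional arrays up to equivalence, and $\u F$ is the affine injection required by Definition~\ref{cbd} (its non-$\u X$ entries are $0/1$, and $\u X$ is a subarray of $\u F(\u X)$), the theorem exhibits a containment.

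The main obstacle I anticipate is the combinatorics of designing the padding so that all three axes are simultaneously rigidified without the gadgets interfering with each other or with $\u X$. In the matrix case there is only one matrix $S$ to constrain; here there are three, and a single spatial block of $\u F(\u X)$ is acted on in all three axes at once, so I must place the "axis-$i$ gadget" in a region that is degenerate (size one, or constant) in the other two axes, and check that the equivalence equations restricted to that region decouple into the pure matrix situation. A secondary technical point is handling empty blocks and the case where some $d_v=0$ or some partition piece is empty, but the footnote's convention about empty arrays makes these harmless. I expect the bulk of the actual proof to be an explicit description of $\u F(\u X)$ together with a direct check of the forced block-diagonal shape of $S_1,S_2,S_3$.
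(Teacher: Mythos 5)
There is a genuine gap at the heart of your plan: the step ``restricting such an equivalence to the block containing $\u X$ gives exactly a block-equivalence of $\u A$'' presupposes that the $0/1$ padding can force the three transforming matrices to be block-diagonal not only with respect to the partition \eqref{1jkk} but also with respect to the splitting between the padding index ranges and the index ranges of $\u A$ itself. No arrangement of constant blocks achieves the latter: nothing prevents an equivalence of $\u F(\u A)\too\u F(\u B)$ from adding rows (or columns) of an identity padding layer into rows (or columns) of the $\u A$-part, since such mixing can be compensated inside the padding; this is exactly the phenomenon that already appears in the paper's construction, where the row/column transformations of the slices of the padded array are \emph{not} forced to respect the padding/original splitting. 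Consequently ``restriction'' is simply not available, and your proposal supplies no substitute for it.

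The paper's proof works around this with two ideas you do not have. First, it removes one direction of the partition at a time (Lemma \ref{jhp}), padding the frontal slices of each stratum with identity blocks whose sizes grow geometrically ($r,2r,\dots,2^{\o t-1}r$ with $r=\min\{m,n\}+1$), so that a \emph{rank} count forces only the slice-mixing matrix $S$ to be block-diagonal conformally with the strata; the transformations acting on rows and columns of the slices remain unconstrained. Second, and crucially, to recover simultaneous block-equivalence of the original slices it does not restrict anything: it observes that the padded slice tuples are block-direct sums $\v M^A=\v\Delta\boxplus\v A$ and $\v M^C=\v\Delta\boxplus\v C$, interprets them as quiver representations, and invokes the Krull--Schmidt theorem to cancel the common summand $R(\v\Delta)$ from $R(\v\Delta)\oplus R(\v A)\cong R(\v\Delta)\oplus R(\v C)$. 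Some cancellation principle of this kind (or an explicit construction with a genuinely new rigidity mechanism, which you only sketch as a hope in your final paragraph) is indispensable; without it the converse direction of your ``if and only if'' does not go through.
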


\subsection{Slices and strata of three-dimensional arrays}\label{lku}

We give
a three-dimensional array $\underline
A=[a_{ijk}]_{i=1}^m{}_{j=1}^n{}_{k=1}^{t}$
by the sequence of matrices
\begin{equation}\label{kir}
\v A:=(A_{1},A_{2},\dots,A_{t}),\qquad A_{k}:=[a_{ijk}]_{ij},
\end{equation}
which are the \emph{frontal slices} of
$\underline A$. For example, a $3\times
3\times 3$ array $\underline
A=[a_{ijk}]_{i=1}
^3{}_{j=1}^3{}_{k=1}^3$ can be given by its
frontal slices
\begin{equation}\label{oir}
\begin{split}
\xymatrix@=-2pt{%
&&&&&&&&\smash{A_3\ \ }&*{}\ar@{-}[rrrrrr]\ar@{-}[dddd]&&&&&&*{}
\ar@{-}[dddddd]
  \\
%%%%%%%%%%%%%%%%%%%%%%
&&&&&&&&&&a_{113}&&a_{123}&&a_{133}\\\\
&&&&&&&&&&a_{213}&&a_{223}&&a_{233}\\
&&&&\smash{A_2\ \ }&*{}\ar@{-}[rrrrrr]
\ar@{-}[dddd]&&&&&&*{}
\\
        %%%%%%%%%%%%%%%%%%%%%%
&&&&&&a_{112}&&a_{122}&&a_{132}&&a_{323}&&a_{333}&{}\\
&&&&&&&&&&&&&&&*{}\ar@{-}[llll]\\
&&&&&&a_{212}&&a_{222}&&a_{232}
            {}\save[]+<5cm,0cm>*{}
\restore
\\
\smash{A_1}&*{}\ar@{-}[rrrrrr]\ar@{-}[dddddd]
        %%%%%%%%%%%%%%%%%%%%%%
*{}\ar@{.}[uuuuuuuurrrrrrrr]&&&&{}&&*{}
\ar@{.}[uuuuuuuurrrrrrrr]\ar@{-}[dddddd]\\
        %%%%%%%%%%%%%%%%%%%%%%
&&a_{111} && a_{121}&& a_{131}&
&a_{322}&&a_{332}
\\&&&&&&&{}\ar@{-}[rrrr]&&&&*{}\ar@{-}[uuuuuu]\\
&&a_{211} && a_{221}&& a_{231}& \\\\
&&a_{311} && a_{321}&& a_{331}& \\
&*{}\ar@{-}[rrrrrr]&&&&&&*{}\ar@{.}[uuuuuuuurrrrrrrr]\\
}
\end{split}
\end{equation}
An array $\underline
A=[a_{ijk}]_{i=1}^m{}_{j=1}^n{}_{k=1}^{t}$ can be also given by the sequence of
\emph{lateral slices} $[a_{i1k}]_{ik},\dots,
[a_{ink}]_{ik}$, and by
the sequence of \emph{horizontal slices}
$[a_{1jk}]_{jk},\dots,[a_{mjk}]_{jk}$.

A \emph{linear reconstruction} of a sequence $(A_1,\dots,A_t)$
of matrices of the
same size given by a nonsingular matrix
$U=[u_{ij}]$ is the transformation
\begin{equation}\label{dgj}
(A_1,\dots,A_t)\ci U:=(A_1u_{11}+\dots+A_tu_{t1},\dots,
A_1u_{1t}+\dots+A_tu_{tt}).
\end{equation}
Clearly, every linear reconstruction of $(A_1,\dots,A_t)$
is a sequence of the following
elementary linear reconstructions:
\begin{itemize}
  \item[(a)]  interchange of two
      matrices,

  \item[(b)] multiply any
      matrix by a nonzero element
      of $\F$,

  \item[(c)] add a matrix multiplied by an element
      of $\F$ to another matrix.
\end{itemize}

The following lemma is obvious.

\begin{lemma}\label{lkg}
Given two three-dimensional arrays $\underline A$ and $\underline B$ of the same size.
\begin{itemize}
  \item[\rm(a)] $\underline A$ and
      $\underline B$ are equivalent
      if and only if $\underline B$
      can be obtained from
      $\underline A$ by
      linear reconstructions of frontal slices, then of lateral slices, and finally of horizontal
      slices.
  \item[\rm(b)] $(R,S,U): \u A \too
      \u B$ if and only if
\begin{equation}\label{krw}
\v B=(R^TA_{1}S,R^TA_{2}S,\dots,R^TA_{t}S)\ci U.
\end{equation}
\end{itemize}
\end{lemma}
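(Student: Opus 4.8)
Both parts are direct unwindings of Definition~\ref{lke}; the lemma is ``obvious'' and the only thing requiring care is the bookkeeping of indices.

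For part~(b), the plan is to start from \eqref{keje} applied to $(S_1,S_2,S_3)=(R,S,U)$ with $R=[r_{ii'}]$, $S=[s_{jj'}]$, $U=[u_{kk'}]$, i.e.\
\[
b_{i'j'k'}=\sum_{i,j,k}a_{ijk}\,r_{ii'}s_{jj'}u_{kk'}.
\]
Freezing $k'$ and reading off the frontal slice $B_{k'}=[b_{i'j'k'}]_{i'j'}$ of $\u B$, the inner sum over $i,j$ is $\sum_{i,j}r_{ii'}(A_k)_{ij}s_{jj'}=(R^TA_kS)_{i'j'}$ --- the transpose on $R$ appearing because $R$ acts on the first (row) index --- so $B_{k'}=\sum_k u_{kk'}\,R^TA_kS$, which by \eqref{dgj} is precisely the $k'$-th matrix of $(R^TA_1S,\dots,R^TA_tS)\ci U$. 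This is \eqref{krw}, and reading the same identity backwards yields the converse.

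For part~(a), I would first record the three ``pure'' instances of~(b): an equivalence $(I,I,U)\colon\u A\too\u A'$ is exactly a linear reconstruction of the frontal slices of $\u A$ by $U$; by the symmetry of \eqref{keje} in the three indices (equivalently, by the computation of~(b) applied to lateral and to horizontal slices in place of frontal ones), an equivalence $(I,S,I)$ is a linear reconstruction of the lateral slices by $S$, and $(R,I,I)$ a linear reconstruction of the horizontal slices by $R$. Then I would use that equivalences compose by componentwise multiplication, namely the composite of $(S_1,S_2,S_3)$ and $(S_1',S_2',S_3')$ is $(S_1S_1',S_2S_2',S_3S_3')$; hence every equivalence $(R,S,U)\colon\u A\too\u B$ factors as
\[
\u A\ \xrightarrow{(I,I,U)}\ \u A_1\ \xrightarrow{(I,S,I)}\ \u A_2\ \xrightarrow{(R,I,I)}\ \u B,
\]
that is, $\u B$ is obtained from $\u A$ by a reconstruction of frontal slices, then of lateral slices, then of horizontal slices. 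Conversely, any such chain of reconstructions is a composition of equivalences, hence an equivalence, and consecutive reconstructions of slices of one kind (by $M$ and then by $M'$) collapse to one (by $MM'$); since by Definition~\ref{lke} an equivalence of three-dimensional arrays is nothing but an arbitrary triple of nonsingular matrices, this gives~(a). I do not expect any genuine obstacle: the only points to get right are the transpose on $R$ in \eqref{krw} and the left/right-multiplication conventions hidden in the word ``reconstruction''.
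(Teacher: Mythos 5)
Your proposal is correct, and since the paper gives no proof at all (it declares Lemma \ref{lkg} obvious), your argument is exactly the intended routine verification: the index computation identifying the $k'$-th frontal slice of the transformed array with the $k'$-th matrix of $(R^TA_1S,\dots,R^TA_tS)\ci U$ for part (b), and for part (a) the observation that equivalences compose componentwise, so $(R,S,U)$ factors as $(I,I,U)$, then $(I,S,I)$, then $(R,I,I)$, each of which is by (b) (and its lateral/horizontal analogues) precisely a linear reconstruction of the corresponding slices. Both the transpose on $R$ and the left/right conventions in \eqref{krw} and \eqref{dgj} are handled correctly.
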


Let $\underline A =[a_{ijk }]_{i=1}^{m}\,{}_{j=1}^{n}\,{}_{k=1}^{ t}$ be a three-dimensional array, whose partition
$\u A=[\u
A_{\alpha \beta \gamma }]_{\alpha
=1}^{\o m}\,{}_{\beta=1}^{\o
n}\,{}_{\gamma=1}^{\o t}
$ into $\o m\cdot \o
n\cdot \o t$ spatial blocks is defined by partitions \eqref{1jkk} of its index sets.
The partition  of $\{1,\dots,t\}$ in \eqref{1jkk} into $\o t$ disjoint subsets defines also the division of $\u A$ by frontal planes into $\bar t$ \emph{frontal strata}
\[
[a_{ijk }]_{i=1}^{m}\,{}_{j=1}^{n}\,
{}_{k=1}^{k_1},\ \
[a_{ijk }]_{i=1}^{m}\,{}_{j=1}^{n}\,
{}_{k=k_1+1}^{k_2},\ \ \dots,\ \
[a_{ijk }]_{i=1}^{m}\,{}_{j=1}^{n}\,
{}_{k=k_{\o t -1}+1}^{k_{\o t}};
\]
each frontal stratum is the union of frontal slices corresponding to the same subset. In the same way, $\underline A$ is divided into $\o n$ \emph{lateral strata} and into $\o m$ \emph{horizontal strata}.

Two partitioned three-dimensional arrays $\underline A$
and $\underline B$ are
block-equivalent if and only if $\underline B$ can be
obtained from $\underline A$ by
linear reconstructions of
frontal strata, of lateral strata, and of
horizontal strata.

\subsection{The lemma that implies Theorem \ref{hep}}\label{gfsg}

\begin{lemma}\label{jhp}
For each partition \eqref{1jkk},
there exists a three-dimensional variable array $\u G({\u X})$ partitioned into $\o m'\cdot\o n'\cdot\o t'$ spatial blocks and satisfying  {\rm(i)} and  {\rm(ii)} from Theorem \ref{hep}
such that two $m\times n\times t$ arrays $\u
A$ and $\u B$
partitioned into $\o m\cdot\o n\cdot\o t$ spatial blocks are block-equivalent if and only if $\u G({\u A})$ and $\u G({\u B})$ are block-equivalent with respect to partition into $\o m'\cdot\o n'\cdot t'$ spatial blocks (i.e., we delete the horizontal partition).
\end{lemma}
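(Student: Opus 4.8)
The plan is to build $\u G(\u X)$ by attaching to $\u X$ a rigid pattern of $0$'s and $1$'s that ``remembers'' the frontal partition, so that an equivalence of the enlarged arrays in which the frontal slices may be recombined freely is nevertheless forced to recombine the frontal slices of $\u X$ only within the strata of \eqref{1jkk}. By Section~\ref{lku} (via Lemma~\ref{lkg}(b)), block-equivalence of $\u A$ and $\u B$ with respect to \eqref{1jkk} amounts to the existence of an equivalence $(R,S,U)\colon\u A\too\u B$ in which $R,S,U$ are block-diagonal with the block sizes dictated by the three partitions in \eqref{1jkk}; since the enlarged horizontal and lateral partitions will make the block-diagonality of $R$ and $S$ on the old rows and columns automatic, the only real task is to force the block-diagonality of $U$ along the $\o t$ frontal strata (of sizes $t_1,\dots,t_{\o t}$) without imposing anything more.

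The construction I would use keeps the $t$ frontal slices $X_1,\dots,X_t$ of $\u X$ (together with their $\o t$-part frontal partition, which the statement then coarsens to a single block), adjoins $\o t$ new rows $\rho_1,\dots,\rho_{\o t}$, each a one-element block of the new horizontal partition, and adjoins $t$ new columns split into $\o t$ blocks $Q_1,\dots,Q_{\o t}$ with $|Q_g|=t_g$. I place $\u X$ into (old rows)$\times$(old columns), set every entry in (new rows)$\times$(old columns) and in (old rows)$\times$(new columns) to $0$, and, writing $k(g,r)$ for the index of the $r$-th slice of the $g$-th frontal stratum, put into the block $\{\rho_g\}\times Q_g$ of the $k$-th frontal slice the standard-basis row vector $e_r^{T}\in\F^{t_g}$ when $k=k(g,r)$, and $0$ otherwise. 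Then every non-variable nonzero entry of $\u G(\u X)$ is a single $1$ (one per new row), so conditions (i)--(ii) hold, and $\u G$ carries exactly the enlarged horizontal, lateral and (coarsened) frontal partitions appearing in the statement, with $\o m'=\o m+\o t$, $\o n'=\o n+\o t$.

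Next I would check the equivalence of the two conditions. Given a block-equivalence $(R,S,U)\colon\u A\too\u B$ with $U=U_1\oplus\dots\oplus U_{\o t}$, the triple $R':=R\oplus(1)\oplus\dots\oplus(1)$, $S':=S\oplus U_1^{-T}\oplus\dots\oplus U_{\o t}^{-T}$, $U':=U$ is an equivalence $\u G(\u A)\too\u G(\u B)$ of the permitted (block-diagonal, resp.\ unrestricted) shape, by a one-line check on each control block $\{\rho_g\}\times Q_g$ using $U_g^{T}U_g^{-T}=I$. Conversely, suppose $(R',S',U')\colon\u G(\u A)\too\u G(\u B)$ with $R'=R_0'\oplus(\lambda_1)\oplus\dots\oplus(\lambda_{\o t})$ and $S'=S_0'\oplus S'_{Q_1}\oplus\dots\oplus S'_{Q_{\o t}}$ block-diagonal along the enlarged partitions (and $U'$ arbitrary). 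Reading the defining identities $G^{B}_{k'}=R'^{T}\bigl(\sum_k G^{A}_k u'_{kk'}\bigr)S'$ on the block $\{\rho_g\}\times Q_g$, the right-hand side collapses to $\lambda_g\,\xi^{T}S'_{Q_g}$, where $\xi$ is the restriction of the $k'$-th column of $U'$ to the rows indexing stratum $g$; when $k'$ lies outside stratum $g$ the left-hand side vanishes and hence $\xi=0$, so ranging over all $g$ we get precisely that $U'$ is block-diagonal along the frontal strata (the remaining identities, with $k'$ inside stratum $g$, merely express the within-stratum block of $U'$ through $\lambda_g$ and $S'_{Q_g}$ and are harmless). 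Restricting the same identities to (old rows)$\times$(old columns) and invoking this block-diagonality, Lemma~\ref{lkg}(b) turns them into a block-equivalence $\u A\too\u B$ with respect to \eqref{1jkk}, since $R_0'$ and $S_0'$ are block-diagonal along the original horizontal and lateral partitions.

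The main obstacle, and the reason the $0$/$1$ pattern has to be chosen with care, is that the naive ``tagging'' patterns either over-constrain $U'$ (e.g.\ one distinct control position per frontal slice pins $U'$ down to a monomial matrix, far too strong) or under-constrain it (a single control position per stratum yields just one linear relation, not block-diagonality). The asymmetric choice above --- each stratum getting one new row that carries the vectors $e_1^{T},\dots,e_{t_g}^{T}$ across its $t_g$ slices --- is exactly what kills every off-stratum block of $U'$ while still letting the new lateral block $S'_{Q_g}$ of $S'$ absorb an arbitrary invertible within-stratum recombination. Everything else --- the exact sizes, keeping track of which new block is which, and (for Theorem~\ref{hep}) iterating the construction with the three axes permuted so as to delete the lateral and then the horizontal partition as well --- is routine bookkeeping.
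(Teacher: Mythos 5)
Your single-step argument is correct. The gadget that adjoins, for each frontal stratum $g$, one new row $\rho_g$ (a singleton block of the horizontal partition) and a group $Q_g$ of $t_g$ new columns (a new block of the lateral partition), carrying the patterns $e_1^{T},\dots,e_{t_g}^{T}$ across the slices of stratum $g$, does force the unrestricted slice-mixing matrix $U'$ to be block-diagonal along the frontal strata, and restricting the identities of Lemma \ref{lkg}(b) to the old rows and columns then yields a block-equivalence $\u A\too\u B$; the forward direction, with the new lateral blocks of $S'$ taken to be $U_g^{-T}$, is also right. This is a genuinely different route from the paper: in Sections \ref{gfs} and \ref{gfs3} the tag rows and columns are merged into the \emph{existing} first horizontal and lateral strata, so the transforming matrices may mix tags with data; the paper therefore needs tags that are identity matrices of sizes $r,2r,\dots,2^{\o t-1}r$ with $r=\min\{m,n\}+1$, a rank argument to force block-diagonality of the slice-mixing matrix, and the Krull--Schmidt theorem to cancel the common tag summand. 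Your gadget is far smaller (it adds $\o t$ rows and $t$ columns instead of exponentially large tags) and needs neither ranks nor Krull--Schmidt, because keeping the control rows and columns in separate blocks makes the read-off immediate.

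There is, however, a genuine gap in your closing claim that the passage to Theorem \ref{hep} --- ``iterating the construction with the three axes permuted'' --- is routine bookkeeping. The Lemma is used in the paper only through that iteration, and your construction does not iterate: it creates \emph{new} blocks in both auxiliary directions ($\o m'=\o m+\o t$, $\o n'=\o n+\o t$). When you next try to delete the lateral partition, the control pattern for each lateral stratum must occupy a $1\times t_g$ footprint inside the lateral slices, i.e.\ it needs new separate blocks in directions $1$ \emph{and} $3$; so the already-deleted frontal direction becomes partitioned again, and every subsequent step re-partitions a previously deleted direction --- the process never closes. (Placing the control entries at existing frontal indices, or merging the new indices into the single frontal stratum, destroys exactly the invertibility of the control block of $S'$ on which your read-off relies, which is the over/under-constraining tension you yourself describe.) The paper's heavier construction --- merging the tags into existing strata so that $\o m'=\o m$, $\o n'=\o n$ and the deleted direction stays unpartitioned, at the price of the rank argument and Krull--Schmidt --- is precisely what makes the three-fold iteration terminate. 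So your argument does prove Lemma \ref{jhp} as an isolated statement, but it cannot replace the paper's proof in the derivation of Theorem \ref{hep} without an additional idea.
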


This lemma implies Theorem \ref{hep} since we can delete the horizontal partition, then the lateral partition, and finally the frontal partition in the same way.

\subsection{Proof of Lemma \ref{jhp} for a partitioned array of size
$m\times n\times 6$}\label{gfs}

In order
to make the proof of Lemma \ref{jhp} clearer, we first
prove it for arrays of size
$m\times n\times 6$ partitioned by a
frontal plane into two spatial blocks of
size $m\times n\times 3$.

Such an array $\u A$ can be given by the
sequence
\begin{equation*}\label{fel}
\v A=(A_1,A_2,A_3\,|\,A_4,A_5,A_6)
\end{equation*}
of its frontal slices, which are $m\times n$ matrices (see \eqref{kir}).  Its two spatial blocks are given by the sequences $(A_1,A_2,A_3)$ and $(A_4,A_5,A_6)$. Construct by
$\u A$ the unpartitioned array $\u M^A$
given by the sequence of frontal slices
\begin{equation}\label{jue}
\begin{split}
&\v M^A=(M^A_1,\dots,M^A_6)\\
&:=\left(
\left[
\begin{MAT}(@){ccccccc}
I_r&0&0&&&&0\\
&&&0&0&0&\\
0&&&&&&A_1
\addpath{(0,2,2)rrrrrrdd}
\addpath{(3,3,2)ddrrrr}
\\
\end{MAT}
\right],
           \,
\left[
\begin{MAT}(@){ccccccc}
0&I_r&0&&&&0\\
&&&0&0&0&\\
0&&&&&&A_2
\addpath{(0,2,2)rrrrrrdd}
\addpath{(3,3,2)ddrrrr}
\\
\end{MAT}
\right],
           \,
\left[
\begin{MAT}(@){ccccccc}
0&0&I_r&&&&0\\
&&&0&0&0&\\
0&&&&&&A_3
\addpath{(0,2,2)rrrrrrdd}
\addpath{(3,3,2)ddrrrr}
\\
\end{MAT}
\right],\right.
                                \\
&\phantom{:=} \left.\left[
\begin{MAT}(@){ccccccc}
0&0&0&&&&0\\
&&&I_{2r}&0&0&\\
0&&&&&&A_4
\addpath{(0,2,2)rrrrrrdd}
\addpath{(3,3,2)ddrrrr}
\\
\end{MAT}
\right],
           \,
\left[
\begin{MAT}(@){ccccccc}
0&0&0&&&&0\\
&&&0&I_{2r}&0&\\
0&&&&&&A_5
\addpath{(0,2,2)rrrrrrdd}
\addpath{(3,3,2)ddrrrr}
\\
\end{MAT}
\right],
           \,
\left[
\begin{MAT}(@){ccccccc}
0&0&0&&&&0\\
&&&0&0&I_{2r}&\\
0&&&&&&A_6
\addpath{(0,2,2)rrrrrrdd}
\addpath{(3,3,2)ddrrrr}
\\
\end{MAT}
\right]
\right),
\end{split}\end{equation}
in which
\begin{equation}\label{luw}
 r:=\min\{m,n\}+1.
\end{equation}
Let $\v
B:=(B_1,B_2,B_3\,|\,B_4,B_5,B_6)$ give another array $\u B$ of the same size as $\u A$, which is partitioned by the frontal plane
conformally to $\u A$. Let us prove
that
\begin{equation}\label{kjt}
\begin{split}
  & \text{$\u A$ and $\u B$ are block-equivalent} \\
    \Longleftrightarrow\ \ & \text{$\u M^A$ and $\u M^B$ are equivalent.}
\end{split}
\end{equation}

$\Longrightarrow$.
In view of Lemma \ref{lkg}(b), $\u B$ can be obtained from $\u A$ by a sequence of the following transformations:
\begin{itemize}
  \item[(i)]
simultaneous equivalence
transformations of $A_1,\dots,A_6$,
  \item[(ii)]  linear reconstructions (a)--(c) (from Section \ref{lku}) of $(A_1,A_2,A_3)$,
  \item[(iii)] linear reconstructions (a)--(c) of $(A_4,A_5,A_6)$.
\end{itemize}
It suffices to consider the case when
$\u B$ is obtained from $\u A$ by one of the transformations (i)--(iii).

If this transformation is (i), then $\u M^B$ can be obtained from $\u M^A$ by a simultaneous equivalence transformation of its frontal slices $M^A_1,\dots,M^A_6$. Hence, $\u M^A$ and $\u M^B$ are equivalent.

If this transformation is (ii), then we make a linear reconstruction of $(M^A_1,M^A_2,M^A_3)$. It spoils the blocks $ [I_r\,0\,0],$ $
[0\,I_r\,0],$ $ [0\,0\,I_r]$ in \eqref{jue}, which
can be restored by simultaneous
elementary transformations of
$M_1^A,\dots,M_6^A$ that do not change the new $A_1,\dots,A_6$. Hence $\u M^A$ and $\u M^B$ are equivalent.

The case of transformation (iii) is considered analogously.
\medskip

$\Longleftarrow$. Let $\u M^A$ and
$\u M^B$ be equivalent. By Lemma 4(b), there exists a matrix sequence
$\v N=(N_1,\dots,N_6)$ such that the matrices in $\v M^A$ and $\v N$
are simultaneously equivalent and $\v N$ is reduced to $\v
M^B$ by some linear reconstruction $\v N\ci S^{-1}=\v M^B$ given by
a nonsingular $6\times 6$ matrix $S=[s_{jk}]$ (see
\eqref{dgj}). Then
\[
(N_1,\dots,N_6)=(M^B_1,\dots,M^B_6)\ci S
\]
and so
\begin{equation}\label{deo}
N_k=\left[
\begin{MAT}(@){ccccccc}
I_rs_{1k}&I_rs_{2k}&I_rs_{3k}&&&&0\\
&&&I_{2r}s_{4k}&I_{2r}s_{5k}&I_{2r}s_{6k}&\\
0&&&&&&C_k
\addpath{(0,2,2)rrrrrrdd}
\addpath{(3,3,2)ddrrrr}
\\
\end{MAT}
\right],\quad k=1,\dots,6,
\end{equation}
where
\begin{equation}\label{lue}
(C_1,\dots,C_6):=(B_1,\dots,B_6)\ci S.
\end{equation}

Since the matrices in $\v M^A$ are simultaneously
equivalent to the matrices in $\v N$, $\rank
M_k^A=\rank N_k$ for all $k$. Since $A_k,\,B_k,\,C_k$ are
$m\times n$, \eqref{luw} shows that
$\rank A_k<r$ and $\rank C_k<r$.
If $k=1,2,3$, then $\rank N_k=\rank M^A_k<2r$, and so by \eqref{deo} $s_{4k}= s_{5k}=s_{6k}=0$. If $k=4,5,6$, then
$\rank N_k=\rank M^A_k\ge 2r$ and so $(s_{4k},s_{5k},s_{6k})\ne (0,0,0)$; furthermore, $\rank N_k=\rank M^A_k< 3r$ and so $s_{1k}= s_{2k}=s_{3k}=0$.
Thus,
\begin{equation}\label{fwl}
S=\begin{bmatrix}
    s_{11} & s_{12}&s_{13} \\
    s_{21} & s_{22}&s_{23} \\
  s_{31} & s_{32}&s_{33} \\
  \end{bmatrix}\oplus
\begin{bmatrix}
    s_{44} & s_{45}&s_{46} \\
    s_{54} & s_{55}&s_{56} \\
  s_{64} & s_{65}&s_{66} \\
  \end{bmatrix}.
\end{equation}

By \eqref{lue} and \eqref{fwl}, the array $\u B$ is block-equivalent to the conformally
partitioned array $\u C$
given by $\v
C=(C_1,C_2,C_3|C_4,C_5,C_6)$. It remains to prove that the arrays $\u A$ and $\u C$ are
block-equivalent.
By \eqref{deo},
\[
N_k=M_k^CQ,\quad k=1,\dots,6,
\]
where
\[
Q:=\begin{bmatrix}
    s_{11}I_{r} & s_{21}I_{r}&s_{31}I_{r} \\
    s_{12}I_{r} & s_{22}I_{r}&s_{32}I_{r} \\
  s_{13}I_{r} & s_{23}I_{r}&s_{33}I_{r} \\
  \end{bmatrix}\oplus
  \begin{bmatrix}
    s_{44}I_{2r} & s_{54}I_{2r}&s_{64}I_{2r} \\
    s_{45}I_{2r} & s_{55}I_{2r}&s_{65}I_{2r} \\
  s_{46}I_{2r} & s_{56}I_{2r}&s_{66}I_{2r} \\
  \end{bmatrix}\oplus I_n.
\]
Therefore, the matrices in
$\v N$ and $\v M^C$ are
simultaneously equivalent. Since the matrices in  $\v
M^A$ and $\v N$ are simultaneously
equivalent, we have that the matrices in
\begin{equation}\label{eku}
\v M^A=(M^A_1,\dots,M^A_6)\quad\text{and}\quad \v M^C=(M^C_1,\dots,M^C_6)
\end{equation}
are simultaneously equivalent. Hence,
the representations
\[
{\xymatrix@C=45pt{
 {\F^{9r+n}}
\ar@/^2pc/[r]^{M^A_1}
\ar@/^/[r]^{M^A_2}_{\vdots}
\ar@/_1.5pc/[r]_{M^A_6}
   &{\F^{3r+m}}
  }}\qquad \text{and}\qquad
{\xymatrix@C=45pt{
 {\F^{9r+n}}
\ar@/^2pc/[r]^{M^C_1}
\ar@/^/[r]^{M^C_2}_{\vdots}
\ar@/_1.5pc/[r]_{M^C_6}
   &{\F^{3r+m}}
  }}
\]
of the quiver
\begin{equation}\label{drh}
\xymatrix@C=45pt@1{
 {\cim}
\ar@/^0.9pc/[r]
\ar@/^0.4pc/[r]
\ar@/_0.9pc/[r]^{\vdots}
   &{\cim}}\qquad (6\text{ arrows})
\end{equation}
are isomorphic.

By \eqref{jue},
the sequences \eqref{eku} have the form
\[
(E_1,\dots,E_6)\oplus (A_1,\dots,A_6)
\quad\text{and}\quad
(E_1,\dots,E_6)\oplus (C_1,\dots,C_6).
\]
By the \emph{Krull--Schmidt theorem} (see
\cite[Corollary 2.4.2]{haz-kir} or
\cite[Section 43.1]{hor-ser}), each
representation of a quiver is
isomorphic to a direct sum of
indecomposable representations; this sum
is uniquely determined, up to
permutation and isomorphisms of direct
summands. Hence, the sequences
$(A_1,\dots,A_6)$ and $(C_1,\dots,C_6)$
gave isomorphic representations of the
quiver \eqref{drh}, and so their matrices are simultaneously
equivalent. Thus, $\u A$ and $\u C$ are block-equivalent, which proves that
$\underline A$ and $\underline B$ are
block-equivalent.
\medskip

\subsection{Proof of Lemma \ref{jhp} for an arbitrary partitioned three-dimensional array}\label{gfs3}

Let us
prove that Lemma \ref{jhp} holds for a
partitioned array $\u A=[\u A_{\alpha \beta \gamma
}]_{\alpha =1}^{\o m}\,{}_{\beta=1}^{\o
n}\,{}_{\gamma=1}^{\o t} $ of
size $m\times n\times t$ whose
partition into $\o m\cdot \o n\cdot \o
t$ spatial blocks is given by \eqref{1jkk}.
There is nothing to prove if $\o t=1$.
Assume that $\o t\ge 2.$

We give $\u A$ by the sequence
\begin{equation*}\label{jde}
\v A=(A_1,\dots,A_{k_1}|A_{k_1+1},\dots,A_{k_2}|
\dots|A_{k_{{\o t}-1}+1},\dots,A_{k_{\o t}}),\quad
k_{\o t}=t
\end{equation*}
of frontal slices
\[A_1=[A_{\alpha \beta 1}]_{\alpha =1}^{\o m}\,{}_{\beta=1}^{\o
n},\
A_2=[A_{\alpha \beta 2}]_{\alpha =1}^{\o m}\,{}_{\beta=1}^{\o
n},\ \dots\]
They are
block matrices of size $m\times n$ with the same partition into $\o m\cdot\o n$
blocks. By analogy with \eqref{jue}, we consider
the array $\u M^A$
given by the sequence $\v
M^A=(M_1^A,\dots,M_{t}^A)$ of block matrices
\[
M_k^A:=\begin{bmatrix}
\Delta_{k1}\ \dots\ \Delta_{kk_1}& &&&0 \\
  & \Delta_{k,{k_1}+1}\ \dots\ \Delta_{k{k_2}}&&& \\
  &&\ddots&&\\\
  &&&\Delta_{k,{k_{\o t-1}+1}}\ \dots\ \Delta_{kk_{\o t}}&\\
  0\ \ \phantom{\ \dots\ \Delta_{kk_1}}&&&&A_k
\end{bmatrix}
 \]
in which the blocks
\[
\underbrace{\Delta_{k1},\ \dots,\ \Delta_{kk_1}}_{\text{\normalsize each of size
$r\times r$}}\hspace{-4pt},
         \quad
\underbrace{\Delta_{k,{k_1}+1},\ \dots,\ \Delta_{k{k_2}}}_{\text{\normalsize each of size $2r\times 2r$}},
     \
\dots,
      \
\underbrace{\Delta_{k,{k_{\o t-1}+1}},\ \dots,\ \Delta_{kk_{\o t}}}_{{\text{\normalsize each of size
$2^{\o t-1}r\times 2^{\o t-1}r$}}}
\]
are defined as follows:    $r:=\min\{m,n\}+1$ and
\begin{equation*}\label{juy}
\begin{array}{c}
(\Delta_{11},\dots,\Delta_{1t}):=(I,0,\dots,0)\\
(\Delta_{21},\dots,\Delta_{2t}):=(0,I,\dots,0)\\
\hdotsfor{1}\\
\,(\Delta_{t1},\dots,\Delta_{tt}):=(0,\dots,0,I).
\end{array}
\end{equation*}
We partition $\u M^A$ by lateral and
horizontal planes that extend the
partition of its spatial block $\u A$, but
we do not partition $\u M^A$ by frontal
planes. Thus, each matrix $M_k^A$ is
partitioned as follows:
\begin{equation}\label{vei}
M_k^A=
\left[\begin{array}{cccccc|c|c|c}
      \Delta_{k1}&\Delta_{k2}&\dots&0&0&0&0&\vdots&0 \\
      \dots&\dots&\dots&\dots&\dots&\vdots&\vdots&\vdots&\vdots \\
      0&0&\dots&\Delta_{k,t-1}&\Delta_{kt}&0&0&\vdots&0 \\
 0&0&\dots&0&0&A_{11k}&A_{12k}&\dots&A_{1{\o n}k} \\\hline
0&0&\dots&0&0&A_{21k}&A_{22k}&\dots&A_{2{\o n}k} \\\hline
\vdots&\vdots&&\vdots&\vdots&\vdots&\vdots&&\vdots
\\\hline
0&0&\dots&0&0&A_{{\o m}1k}&A_{{\o m}2k}&\dots&A_{{\o m}{\o n}k}
    \end{array}\right].
\end{equation}

Let $\u B=[\u B_{\alpha \beta \gamma
}]_{\alpha =1}^{\o m}\,{}_{\beta=1}^{\o
n}\,{}_{\gamma=1}^{\o t}$ be an array
of the same size and with the same partition into spatial blocks as $\u A$. Let us prove that
\begin{equation}\label{klj}
\begin{split}
  & \text{$\u A$ and $\u B$ are block-equivalent} \\
    \Longleftrightarrow\ \ & \text{$\u M^A$ and $\u M^B$ are block-equivalent.}
\end{split}
\end{equation}

$\Longrightarrow$. It is proved as for
\eqref{kjt}.

$\Longleftarrow$.  Let $\u M^A$ and
$\u M^B$ be block-equivalent. Then there exists a sequence $\v N=(N_1,\dots,N_t)$ of matrices of the same size and with the same partition as \eqref{vei} such that the matrices in  $\v
M^A$ and $\v N$ are simultaneously block-equivalent
and $\v N\ci S^{-1}=\v M^B$ for some
nonsingular matrix $S=[s_{jk}]\in\F^{t\times t}$. Thus,
\[
(N_1,\dots,N_t)=(M^B_1,\dots,M^B_t)\ci S
\]
and so
\begin{multline*}
N_k=
\begin{bmatrix}I_rs_{1k}
&\dots&I_rs_{k_1k}\end{bmatrix}\oplus
\begin{bmatrix}I_{2r}s_{k_1+1,k}
&\dots&I_{2r}s_{k_2k}\end{bmatrix}\\
\oplus\dots\oplus
\begin{bmatrix}I_{2^{{\o t}-1}r}s_{k_{{\o t}-1}+1,k}&
\dots&I_{2^{{\o t}-1}r}s_{k_{\o t}k}\end{bmatrix}
\oplus C_k,
\end{multline*}
where
\begin{equation}\label{lue1}
\v C=(C_1,\dots,C_t):=(B_1,\dots,B_t)\ci S.
\end{equation}
Denote by $\u C$
the array defined by \eqref{lue1} and partitioned
conformally with the partitions of $\u
A$ and $\u B$.

Reasoning as in Section \ref{gfs}, we prove that
\[
S=S_1\oplus S_2\oplus\dots\oplus S_{\o t},\qquad S_1\in\F^{k_1\times k_1},\
S_2\in\F^{(k_2-k_1)\times
(k_2-k_1)},\ \dots.
\]
Thus, the arrays
$\u C$ and $\u B$ are block-equivalent.
It remains to prove that
\begin{equation}\label{dtw}
\text{the arrays $\u A$ and $\u C$ are block-equivalent.}
\end{equation}

We can reduce $\v N$ to $\v
M^C=(M^C_1,\dots,M^C_{t})$
by simultaneous elementary
transformations of columns
$1,\dots,k_1$ of $N_1,\dots,N_t$, simultaneous elementary
transformations of columns
$k_1+1,\dots,k_2$, \dots, and simultaneous elementary
transformations of columns
$k_{\o t-1}+1,\dots,k_{\o
t}$ (these transformations do not change $\u C$). Hence, the matrices in $\v
N$ and $\v M^C$ are simultaneously block-equivalent. Since the matrices in  $\v M^A$ and $\v N$
are simultaneously block-equivalent, we have
that
\begin{equation}\label{dfr}
\parbox[c]{0.8\textwidth}{the matrices in $\v M^A$ and $\v M^C$ are simultaneously block-equivalent.
}
\end{equation}

By the \emph{block-direct sum} of two
block matrices $M=[M_{\alpha \beta
}]_{\alpha =1}^p{}_{\beta =1}^q$ and
$M=[N_{\alpha \beta }]_{\alpha
=1}^p{}_{\beta =1}^q$, we mean the block
matrix
\[
M\boxplus N:=[M_{\alpha \beta }\oplus N_{\alpha \beta }]_{\alpha =1}^p{}_{\beta =1}^q.
\]
This operation was studied in
\cite{ser}; it is naturally extended to
$t$-tuples of block matrices:
\[
(M_1,\dots,M_t)\boxplus (N_1,\dots,N_t):=(M_1\boxplus N_1,\dots, M_t\boxplus N_t).
\]

By \eqref{vei},
$M^A_k=\Delta_k\boxplus A_k$ for $k=1,\dots,t$, where
\[
\Delta _k:=
\begin{MAT}(@){4ccccc2c2c2c2c4}
  \first4
      \Delta_{k1}& \Delta_{k1}&\dots&\dots&0&&&& \\
    \vphantom{A}\dots&\dots&\dots&\dots&\dots&&&& \\
      0&\dots&\dots&\Delta_{k,t-1}&\Delta_{kt}&&&&
\\2
&&&&&&&& \\2
&&&&&&&& \\2
&&&&&&&& \\2
&&&&&&&& \\4
\end{MAT}
\]
(the empty strips do not have entries). Thus, $\v
M^A=\v \Delta\boxplus \v A$.

Each matrix $M^C_k$ has the form
\eqref{vei} with $C_{ijk}$ instead of
$A_{ijk}$, and so $\v M^C=\v
\Delta\boxplus \v C$.

Let us prove that the matrices in $\v A$ and
$\v C$ are simultaneously block-equivalent.
Define the quiver $Q$ with $\o m+\o n$
vertices $1,\dots,\o m,1',\dots,\o n'$
and with $\o m\o nt$ arrows: with $t$ arrows
\begin{equation}\label{kkg}
{\xymatrix@C=100pt{
 {\beta'}
\ar@/^2pc/[r]^{\lambda_{\alpha \beta 1}}
\ar@/^/[r]^{\lambda_{\alpha \beta 2}}_{\vdots}
\ar@/_1.5pc/[r]_{\lambda_{\alpha \beta t}}
   &{\alpha}
  }}
\end{equation}
from each vertex $\beta'
\in\{1',\dots,\o n'\}$ to each vertex
$\alpha\in\{1,\dots,\o m\}$.

Let $\v K=(K_1,\dots,K_{t})$ be an arbitrary sequence of block matrices
$K_k=[K_{\alpha \beta k}]_{\alpha
=1}^{\o m}{}_{\beta =1}^{\o n}$, in which $K_{\alpha \beta k}$ is of
size $m_{\alpha}\times n_{\beta}$. This sequence defines the array $\u K$ partitioned into $\o m\cdot\o n\cdot 1$ spatial blocks. Define the representation
$R(\v K)$ of $Q$ by assigning mappings to the arrows
\eqref{kkg} as follows:
\[
{\xymatrix@C=100pt{
 {\F^{n_{\beta }}}
\ar@/^2pc/[r]^{K_{\alpha \beta 1}}
\ar@/^/[r]^{K_{\alpha \beta 2}}_{\vdots}
\ar@/_1.5pc/[r]_{K_{\alpha \beta  t}}
   &{\F^{m_{\alpha }}}
  }}
\]

Let $\v L=(L_1,\dots,L_{t})$ be
another sequence of block matrices $L_k=[L_{\alpha \beta k}]_{\alpha
=1}^{\o m}{}_{\beta =1}^{\o n}$, in which all matrices have the same size and the same partition into
blocks as the matrices of $\v K$. Clearly, the matrices in  $\v K$ and $\v L$ are
simultaneously block-equivalent if and
only if the representations $R(\v K)$
and $R(\v L)$ are isomorphic.

By \eqref{dfr}, the matrices in  $\v M^A=\v
\Delta\boxplus \v A$  and $\v M^C=\v
\Delta\boxplus \v C$ are simultaneously
block-equivalent. Hence, the representations
$R(\v M^A)=R(\v \Delta)\oplus R(\v A)$
and $R(\v M^C)=R(\v \Delta)\oplus R(\v
C)$ of $Q$ are isomorphic. By the
Krull--Schmidt theorem for quiver
representations, $R(\v A)$  and $R(\v
C)$ are isomorphic too. Thus, the matrices in  $\v A$
and $\v C$ are simultaneously
block-equivalent, which proves \eqref{dtw}.

We have proved \eqref{klj}, which
finishes the proof of Lemma \ref{jhp} and hence the proof of  Theorem \ref{hep}.

\section{Proof of the statement (\ref{kjt2})}
\label{sss3}

In this section, we prove several
corollaries of Theorem \ref{hep}.

\begin{corollary}\label{ddr}
Theorem \ref{ktw1} holds if and only if it holds with the condition
\begin{equation}\label{kkt2}
\text{$\u F({\cal A})$ and $\u F({\cal B})$ are block-equivalent}
\end{equation}
instead of \eqref{kkt1}.
\end{corollary}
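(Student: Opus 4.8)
The plan is to derive the corollary from Theorem~\ref{hep}. Call Theorem~\ref{ktw1} the \emph{equivalence version} and the statement obtained from it by replacing \eqref{kkt1} with \eqref{kkt2} the \emph{block-equivalence version}. The implication ``equivalence version $\Rightarrow$ block-equivalence version'' is immediate and is not needed later, so I will concentrate on the reverse implication: the block-equivalence version implies Theorem~\ref{ktw1}. This is where Theorem~\ref{hep} does the work.

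So assume the block-equivalence version and fix a graph $G$ (with every left vertex having at most three arrows) together with a dimension $d$. Let $\u F(\mc X)$ be the partitioned variable array it provides: $\u X_1,\dots,\u X_p$ are $p$ of the spatial blocks of $\u F$, every other entry of $\u F$ is $0$ or $1$, and $\mc A\simeq\mc B$ if and only if $\u F(\mc A)$ and $\u F(\mc B)$ are block-equivalent with respect to the partition of $\u F$. I would then apply Theorem~\ref{hep} to precisely that partition. It produces a partitioned variable array $\u H(\u Y)$ whose only non-constant spatial block is a variable array $\u Y$ of the size of $\u F$, all other entries being $0$ or $1$, and such that $\u H(\u A)$ and $\u H(\u B)$ are equivalent as unpartitioned arrays if and only if $\u A$ and $\u B$ are block-equivalent, for all arrays $\u A,\u B$ partitioned conformally to $\u F$. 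Put $\u F'(\mc X):=\u H(\u F(\mc X))$, i.e., substitute $\u F(\mc X)$ for the block $\u Y$.

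Two things then need to be checked. First, $\u F'$ has the shape required by Theorem~\ref{ktw1}: refine the partition of $\u F'$ by subdividing the $\u Y$-block according to the partition of $\u F$; then $\u X_1,\dots,\u X_p$ are $p$ spatial blocks of $\u F'$, and every other spatial block carries only $0$'s and $1$'s (an entry of $\u F'$ lying outside the $\u Y$-block is $0$ or $1$ by the construction of $\u H$, and an entry lying inside the $\u Y$-block but outside the blocks $\u X_i$ is an entry of $\u F$ outside its blocks $\u X_i$, hence again $0$ or $1$). Second, $\u F'$ realizes the right reduction: for representations $\mc A,\mc B$ of $G$ of dimension $d$ one has $\mc A\simeq\mc B$ iff $\u F(\mc A)$ and $\u F(\mc B)$ are block-equivalent (the hypothesis on $\u F$), iff $\u F'(\mc A)=\u H(\u F(\mc A))$ and $\u F'(\mc B)=\u H(\u F(\mc B))$ are equivalent as unpartitioned arrays (the property of $\u H$, with $\u F(\mc A),\u F(\mc B)$ in the role of $\u A,\u B$). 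Hence $\u F'$ witnesses Theorem~\ref{ktw1} for $G$ and $d$; since $G$ and $d$ were arbitrary, Theorem~\ref{ktw1} follows.

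The step I expect to be the main obstacle is not a new idea but the bookkeeping just described: one must make sure that after substituting $\u F(\mc X)$ into $\u H$ and refining the partition, the resulting variable array has \emph{exactly} $\u X_1,\dots,\u X_p$ as its non-constant spatial blocks, so that the structural hypotheses of Theorem~\ref{ktw1} hold verbatim; and one must invoke Theorem~\ref{hep} for the partition of $\u F$ itself---not for a coarser partition (which need not detect isomorphism) nor for an unnecessarily finer one---so that its ``block-equivalent $\iff$ equivalent'' conclusion applies to the concrete pair $\u F(\mc A),\u F(\mc B)$. Everything else is formal.
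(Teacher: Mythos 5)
Your proposal is correct and takes essentially the same route as the paper: the paper also proves only the substantive direction (block-equivalence version $\Rightarrow$ Theorem \ref{ktw1}) by removing the partition of $\u F({\cal X})$ via the construction behind Theorem \ref{hep}, re-running the stagewise argument of Section \ref{gfs3} on $\u F({\cal X})$, which produces exactly your composite $\u H(\u F({\cal X}))$. Your only difference is organizational—you invoke Theorem \ref{hep} as a black box for the partition of $\u F$ and then verify the structural bookkeeping and the chain of equivalences, rather than unfolding its three-stage proof.
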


\begin{proof}
Suppose  Theorem \ref{ktw1} with \eqref{kkt2} instead of \eqref{kkt1} holds for some
      partitioned three-dimensional variable
      array $\u F({\cal X})$. Reasoning as in
      Section \ref{gfs3}, we first construct an array $\u F_1({\cal X}):=\u M^{F({\cal X})}$ that is not partitioned by frontal planes and satisfies the conditions of Theorem \ref{ktw1} with \eqref{kkt2} instead of \eqref{kkt1}. Then we apply an analogous construction to $\u F_1({\cal X})$ and obtain an array $\u F_2({\cal X})$  that is not partitioned by frontal and lateral planes and satisfies the conditions of Theorem \ref{ktw1} with \eqref{kkt2} instead of \eqref{kkt1}. At last, we
      construct an unpartitioned array $\u F_3({\cal X})$ that satisfies the conditions of Theorem \ref{ktw1}.
\end{proof}

We draw
\begin{equation}\label{jg7}
\begin{split}
\xymatrix@R=8pt@C=6pt{
&&*{}\ar@{-}'[rrr]&&&*{}\\
*{}\ar@{-}[rru]|{\,U\,}
\ar@{-}[rrr]|{S}
\ar@{-}[dd]|{R\vphantom{A^A_A}}
&&&*{}&&\\
&&&&&*{}\\
*{}\ar@{}[rrruu]|{\u {\pmb A}}
\ar@{-}'[rrr]'[rrruu]'
[rrruurru]'[rrrrru]'[rrr]
&&&*{}&&\\
}
                \quad
\xymatrix@=10pt{
\\
\too\\
}
                     \quad
\xymatrix@R=8pt@C=6pt{
&&*{}\ar@{-}'[rrr]&&&*{}\\
*{}\ar@{-}[rru]
\ar@{-}[rrr]
\ar@{-}[dd]
&&&*{}&&\\
&&&&&*{}\\
*{}\ar@{}[rrruu]|{\underline {\pmb B}}
\ar@{-}'[rrr]'[rrruu]'
[rrruurru]'[rrrrru]'[rrr]
&&&*{}&&\\
}
\end{split}
\end{equation}
if $(R,S,U): \u A \too \u B$.
We denote arrays in illustrations by  bold letters.

Gelfand and Ponomarev \cite{gel-pon}
proved that the problem of classifying
pairs of matrices up to
simultaneous similarity contains the problem of classifying $p$-tuples of matrices up to simultaneous similarity  for an arbitrary $p$. A three-dimensional analogue of their statement is the following corollary, in which we use the notion ``contains'' in the sense of Definition \ref{cbd}.

\begin{corollary}\label{dek12}
The problem \eqref{zzzz} contains
the problem of classifying $p$-tuples of
three-dimensional arrays up to
simultaneous equivalence
for an arbitrary $p$.
\end{corollary}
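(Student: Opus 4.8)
The plan is to reduce the classification of $p$-tuples of three-dimensional arrays up to simultaneous equivalence to the classification of a single partitioned array up to block-equivalence, and then invoke Theorem \ref{hep} to pass from block-equivalence of a partitioned array to ordinary equivalence of an unpartitioned one. This mimics the Gelfand--Ponomarev construction (and Example \ref{fsq}) in the three-dimensional setting, with the extra bookkeeping forced by the third index.

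First I would fix a $p$-tuple $\mc A = (\u A_1, \dots, \u A_p)$ of arrays of size $m\times n\times t$, and build out of it a larger array $\u G(\mc A)$, partitioned by frontal, lateral, and horizontal planes, in which the $p$ given arrays $\u A_i$ occur as spatial blocks and all other spatial blocks are filled with fixed $0/1$ patterns (identity blocks and zero blocks). The idea, exactly as in Example \ref{fsq}, is to arrange the $\u A_i$ along a ``superdiagonal'' of spatial blocks while placing identity blocks in a rigid pattern so that any block-equivalence of $\u G(\mc A)$ with $\u G(\mc B)$ is forced to act by a single triple $(R,S,U)$ of matrices that is the same on every diagonal block -- i.e., the transforming matrices in each of the three directions must be block-scalar, $R = R_0 \oplus \dots \oplus R_0$, etc. Once that rigidity is established, a block-equivalence $\u G(\mc A) \too \u G(\mc B)$ restricts precisely to a simultaneous equivalence $(R_0, S_0, U_0): \u A_i \too \u B_i$ for all $i$, and conversely any simultaneous equivalence of the tuples extends blockwise to a block-equivalence of $\u G(\mc A)$ and $\u G(\mc B)$. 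Thus $\mc A$ and $\mc B$ are simultaneously equivalent iff $\u G(\mc A)$ and $\u G(\mc B)$ are block-equivalent; this already fits the format of Definition \ref{cbd}, since $\u G$ is an affine map placing the $\u X_i$ as subarrays and filling the rest with constants.

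The second step is immediate: apply Theorem \ref{hep} to the partition of $\u G(\mc X)$, obtaining a variable array $\u F(\u G(\mc X))$ in which one spatial block is $\u G(\mc X)$ (hence all the $\u X_i$ still appear as subarrays, with everything else $0/1$) and such that two specializations are block-equivalent iff the resulting unpartitioned arrays are equivalent. Composing, $\mc A$ and $\mc B$ are simultaneously equivalent iff $\u F(\u G(\mc A))$ and $\u F(\u G(\mc B))$ are equivalent as unpartitioned arrays, which is exactly a containment of the $p$-tuple equivalence problem in the problem \eqref{zzzz} in the sense of Definition \ref{cbd}. (One should also note the affine map so obtained is injective, since each $\u X_i$ is literally a subarray of the target.)

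The main obstacle is the rigidity argument of the first step: I must choose the sizes and positions of the identity blocks so that the only block-equivalences between $\u G(\mc A)$ and $\u G(\mc B)$ are the ``diagonal'' ones, and in three directions simultaneously rather than one. The natural device is a rank/size argument like the one already used in Section \ref{gfs}: make the auxiliary identity blocks large enough (of size $r = \min\{m,n\}+1$ or a suitable power of $2$ times it) so that rank considerations on the frontal, lateral, and horizontal strata force the transforming matrices to be block-diagonal with equal blocks, and force the linear reconstructions in each direction to be trivial on the auxiliary strata. Carrying this out cleanly in all three directions at once -- rather than peeling them off one at a time as Lemma \ref{jhp} does -- is the delicate part; an alternative is to phrase $\u G$ itself as an iterated construction, handling one direction at a time, so that the needed rigidity in each direction is exactly the statement already proved in Section \ref{gfs3}.
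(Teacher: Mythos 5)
Your overall architecture is the same as the paper's: first embed the classification of $p$-tuples up to simultaneous equivalence into the classification of a single partitioned array up to block-equivalence, then remove the partition by Theorem \ref{hep}. The second step is exactly what the paper does and is unproblematic, as is the verification of Definition \ref{cbd}. The gap is in the first step, which you leave as a plan, and the devices you propose for it would not deliver the rigidity you need. Under block-equivalence the transforming matrices are block-diagonal by definition, so each spatial block is transformed separately by the corresponding triple of diagonal blocks; consequently rank or size considerations on strata (the tool of Section \ref{gfs}) impose no relation at all between diagonal blocks belonging to different strata and cannot force the transformation to be ``block-scalar''. The rank argument of Section \ref{gfs} serves a different purpose: it forces an \emph{unpartitioned} equivalence to respect a partition, which is precisely the content of Theorem \ref{hep} that you already invoke in step two. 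What does force relations between diagonal blocks are fixed identity blocks that are thin in one direction: a spatial block equal to $I$ whose stratum in one direction has thickness $1$ yields a constraint of the shape $S^{T}IUr=I$, tying together, up to a scalar, the two diagonal blocks acting in the other two directions. Your alternative suggestion, iterating the construction of Section \ref{gfs3} one direction at a time, does not address this either, since that construction only deletes partitions and never makes distinct diagonal blocks equal (the systematic tool for forcing equality or contragredience of diagonal blocks is the identity-block linking of Theorem \ref{kts}).

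Once this is realized, the superdiagonal arrangement is unnecessary and only creates extra forcing work. The paper's construction $\u N^{\mc A}$ is simpler: place $\u A_1,\dots,\u A_p$ in the \emph{same} lateral and horizontal strata, stacked as $p$ frontal strata, so that a single pair $(R,S_2)$ automatically acts on all of them, and adjoin $p$ thin identity blocks $I_t$ of size $1\times t\times t$, one in each frontal stratum. Any block-equivalence then has the form $([r]\oplus R,\,S_1\oplus S_2,\,U_1\oplus\dots\oplus U_p)$, and equating the identity blocks gives $S_1^{T}U_kr=I_t$ for every $k$, whence $U_1=\dots=U_p$ and $(R,S_2,U_1):\u A_k\too\u B_k$ for all $k$; the converse direction is immediate. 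So your plan can be repaired, but only by replacing the rank-based rigidity argument with identity cross-links of this kind; as written, the key step of your reduction is missing.
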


\begin{proof} Due to Theorem \ref{hep}, it suffices to prove that the second problem is contained in the problem of classifying partitioned three-dimensional
arrays up to block-equivalence.

Let $\mathcal A=(\u A_1,\dots,\u A_p)$
be a sequence of unpartitioned arrays
of size $m\times n\times t$.
Define the partitioned array
\[
\u N^{\cal A}=[\u N^{\cal A}_{\alpha \beta\gamma }]_{\alpha =1}^{2}{}_{\beta =1}^{2}{}_{\gamma =1}^p=
\xymatrix@R=0pt@C=10pt
{
&&&&&&*{}\ar@{.}[d]&&&*{}&&&*{}&\\
&&&&&&*{}&&&
*{}\ar@{.}'[u]'[urrr][urrrd]
&&&*{}&\\
&&&&&&&&&&&&&\\
&&&&&&&&&&&&&\\
&&&&*{}\ar@{.}'[d][drrr]
\ar@{-}[uuuurrrrr]|{\,I_t}
\ar@{-}'[rrr]'[rrrrruuuu]'[rrrrruuuulll]
[rrrrruuuullllldddd]
&&&*{}&&&*{}\ar@{.}[d]&&&\\
&&&&&&&
*{}\ar@{-}'[rrr]'[rrrrruuuu]'[rrrrruuuulll]
'[rrrrruuuullllldddd]
\ar@{.}'[u]'[urrr][urrruuuurr]
&&&*{}\ar@{-}'[dddd]'[rr][rruuuu]
&&*{}&\\
&&&&&&&&&&&&&\\
&&&&&&&&&&&&&\\
&&*{}\ar@{.}'[d][drrr]\ar@{--}[rruuuu]&&&*{}\ar@{--}'[rruuuu]'
&&&*{}&&&&&\\
&&&&&*{}\ar@{--}[rruuuu]
*{}\ar@{.}'[u]'[urrr][urrruuuurr]
&&*{}\ar@{-}[uuuu]
\ar@{}[uuuurr]|{\smash{\pmb{\u A_p}}}
&*{}\ar@{--}[rruuuu]
\ar@{-}[rr]&&*{}
&&&\\
&&&&&&&&&&&&&\\
&&&&&&&&&&&&&\\
*{}\ar@{-}[uuuurrrrr]|{\,I_t}
\ar@{-}'[rrr]'[rrrrruuuu]'[rrrrruuuulll]
[rrrrruuuullllldddd]
&&&*{}&&&*{}\ar@{.}'[rruuuu][rruuu]
&&&&&&&\\
*{}\ar@{.}'[rrrrrruuuuuuuuuuuu]
[rrrrrrrrrruuuuuuuuuuuu]
\ar@{.}[rrr]
&&&*{}
\ar@{-}'[rruuuu]'[rruuuurrr]
'[rruuuurrrddddll]'[]'[dddd]
'[ddddrrr][rrr]
%%%%%
\ar@{.}'[u]'[urrr]'[rrr]
&&&*{}&*{}&*{}\ar@{--}[rruuuu]&&&&&\\
&&&&&&&&&&&&&\\
&&&&&&&&&&&&&\\
&&&&&&&&&&&&&\\
*{}&&&
*{}\ar@{}[uuurrr]|{\smash{\pmb{\u A_1}}}
\ar@{.}'[lll][llluuuuu]
&&&*{}
\ar@{-}'[rruuuu][rruuuuuuuu]
&&&&&&&\\
  }
  \]
in which
\[
\u N^{\cal A}_{111}=\dots= \u N^{\cal A}_{11p}=I_t\text{ of size }1\times t\times t,\qquad
(\u N^{\cal A}_{221},\dots,
\u N^{\cal A}_{22p})=\mathcal A,
\]
and the other spatial blocks are zero (the
spatial blocks are indexed as the entries
in \eqref{oir}; the diagonal lines in
$\u N^{\cal A}_{111}$ and $\u N^{\cal
A}_{11p}$ denote the main diagonal of
$I_n$ consisting of units).

Let $\mathcal B=(\u B_1,\dots,\u B_p)$
be another sequence of unpartitioned
arrays of the same size $m\times
n\times t$. Let us prove that the arrays in $\mathcal
A$ and $\mathcal B$ are simultaneously
equivalent if and only if $\u N^{\cal
A}$ and $\u N^{\cal B}$ are
block-equivalent.

$\Longrightarrow$. It is obvious.

$\Longleftarrow$. Let $\u N^{\cal
A}$ and $\u N^{\cal B}$ be
block-equivalent; that is, there exists
\[
([r]\oplus R,S_1\oplus S_2,U_1\oplus\dots\oplus U_p): \u N^{\cal A} \too \u N^{\cal
B}.
\]
In the
notation \eqref{jg7},
\[
\xymatrix@R=0pt@C=7pt
{
&&&&&&*{}\ar@{.}[d]&&&*{}&&&*{}&\\
&&&&&&*{}&&&
*{}\ar@{.}'[u]'[urrr]'[urrrd]
&&&*{}&\\
&&&&&&&&&&&&&\\
&&&&&&&&&&&&&\\
&&&&*{}\ar@{.}'[d][drrr]
\ar@{-}[uuuurr]|{U_p}
\ar@{-}[uuuurrrrr]|{\,I_t}
\ar@{-}'[rrr]'[rrrrruuuu]'[rrrrruuuulll]
&&&*{}&&&*{}\ar@{.}[d]&&&\\
&&&&&&&
*{}\ar@{-}'[rrr]'[rrrrruuuu]'[rrrrruuuulll]
[rrrrruuuullllldddd]
\ar@{.}'[u]'[urrr]'[urrruuuurr]
&&&*{}\ar@{-}'[dddd]'[rr][rruuuu]
&&*{}&\\
&&&&&&&&&&&&&\\
&&&&&&&&&&&&&\\
&&*{}\ar@{.}'[d][drrr]
\ar@{--}'[rruuuu]'&&&*{}\ar@{--}'[rruuuu]'
&&&*{}&&&&&\\
&&&&&*{}\ar@{--}'[rruuuu]'
*{}\ar@{.}'[u]'[urrr]'[urrruuuurr]
&&*{}\ar@{-}[uuuu]
\ar@{}[uuuurr]|{\smash{\pmb{ {\u A}_p}}}
&*{}\ar@{--}'[rruuuu]'
\ar@{-}[rr]&&*{}
&&&\\
&&&&&&&&&&&&&\\
&&&&&&&&&&&&&\\
*{}
\ar@{-}[uuuurrrrr]|{\,I_t}
\ar@{-}[rrr]|{S_1}
\ar@{-}[uuuurr]|{U_1}
&&&*{}\ar@{-}'[uuuurr]
'[uuuul]
&&&*{}\ar@{.}[lll]|{S_2}
\ar@{.}'[rruuuu]'[rruuu]
&&&&&&&\\
*{}\ar@{.}[u]
\ar@{.}'[rrrrrruuuuuuuuuuuu]
'[rrrrrrrrrruuuuuuuuuuuu]
\ar@{.}[rrr]
&&&*{}
\ar@{-}'[rruuuu]'[rruuuurrr]
'[rruuuurrrddddll]'[]'[dddd]
'[ddddrrr]'[rrr]
%%%%%
\ar@{.}[u]
&&&*{}\ar@{.}[u]&*{}&*{}\ar@{--}'[rruuuu]'&&&&&\\
&&&&&&&&&&&&&\\
&&&&&&&&&&&&&\\
&&&&&&&&&&&&&\\
*{}\ar@{.}[uuuu]|{R}&&&
*{}\ar@{}[uuurrr]|{\smash{\pmb {\u A_1}}}
\ar@{.}[lll]
&&&*{}
\ar@{-}'[rruuuu]'[rruuuuuuuu]
&&&&&&&\\
  }
%%%%%%%%%%%%%%%%%%%%%%%%%%%%%%%%%%%%%%%%%%%%%%%
\hspace{-10pt}\xymatrix@=10pt{
\\ \\ \\
\too\\
}
%%%%%%%%%%%%%%%%%%%%%%%%%%%%%%%%%%%%%%%%%%%%%%%%
\xymatrix@R=0pt@C=7pt
{
&&&&&&*{}\ar@{.}[d]&&&*{}&&&*{}&\\
&&&&&&*{}&&&
*{}\ar@{.}'[u]'[urrr][urrrd]
&&&*{}&\\
&&&&&&&&&&&&&\\
&&&&&&&&&&&&&\\
&&&&*{}\ar@{.}'[d][drrr]
\ar@{-}[uuuurr]
\ar@{-}[uuuurrrrr]|{\,I_t}
\ar@{-}'[rrr]'[rrrrruuuu]'[rrrrruuuulll]
&&&*{}&&&*{}\ar@{.}'[d]'&&&\\
&&&&&&&
*{}\ar@{-}'[rrr]'[rrrrruuuu]'[rrrrruuuulll]
'[rrrrruuuullllldddd]
\ar@{.}'[u]'[urrr]'[urrruuuurr]
&&&*{}\ar@{-}'[dddd]'[rr]'[rruuuu]
&&*{}&\\
&&&&&&&&&&&&&\\
&&&&&&&&&&&&&\\
&&*{}\ar@{.}'[d][drrr]
\ar@{--}[rruuuu]&&&*{}\ar@{--}[rruuuu]
&&&*{}&&&&&\\
&&&&&*{}\ar@{--}'[rruuuu]'
*{}\ar@{.}'[u]'[urrr]'[urrruuuurr]
&&*{}\ar@{-}[uuuu]
\ar@{}[uuuurr]|{\smash{\pmb{\u B_p}}}
&*{}\ar@{--}'[rruuuu]'
\ar@{-}[rr]&&*{}
&&&\\
&&&&&&&&&&&&&\\
&&&&&&&&&&&&&\\
*{}
\ar@{-}[uuuurrrrr]|{\,I_t}
\ar@{-}[rrr]
\ar@{-}[uuuurr]
&&&*{}\ar@{-}'[uuuurr]
'[uuuul]
&&&*{}\ar@{.}[lll]
\ar@{.}'[rruuuu]'[rruuu]
&&&&&&&\\
*{}\ar@{.}[u]
\ar@{.}'[rrrrrruuuuuuuuuuuu]
'[rrrrrrrrrruuuuuuuuuuuu]
\ar@{.}[rrr]
&&&*{}
\ar@{-}'[rruuuu]'[rruuuurrr]
'[rruuuurrrddddll]'[]'[dddd]
'[ddddrrr]'[rrr]
%%%%%
\ar@{.}[u]
&&&*{}\ar@{.}[u]&*{}&*{}\ar@{--}'[rruuuu]'&&&&&\\
&&&&&&&&&&&&&\\
&&&&&&&&&&&&&\\
&&&&&&&&&&&&&\\
*{}\ar@{.}[uuuu]&&&
*{}\ar@{}[uuurrr]|{\smash{\pmb{\u B_1}}}
\ar@{.}[lll]
&&&*{}
\ar@{-}'[rruuuu]'[rruuuuuuuu]
&&&&&&&\\
  }
  \]
Equating the spatial blocks
$(1,1,1),\dots,(1,1,p)$, we get
$S_1^TI_tU_1r=I_t$, \dots,
$S_1^TI_tU_pr=I_t$ (which follows from \eqref{keje} in analogy to
\eqref{krw}). Hence, $U_1=\dots=U_p$
and so $(R,S_2,U_1):\u A_1\too\u B_1,\
\dots,\ \u A_p\too\u B_p$.
\end{proof}

In the next two corollaries, we consider two important special cases of Theorem \ref{ktw1}.  Their proofs may help to understand the proof of Theorem \ref{ktw1}. If $(R,S,U): \u A \too \u B$, then we write that \emph{$\u A$ is reduced by $(R,S,U)$-transformations}. Recall that the problems of classifying $(1,2)$-tensors and $(0,3)$-tensors are the problems of classifying $3$-dimensional arrays up to $(S,S,S^{-T})$-transformations and $(S,S,S)$-transformations, respectively.
Recall also that each partitioned three-dimensional array is partitioned
into strata by
frontal, lateral, and horizontal
planes, and each stratum consists of slices; see  Section \ref{lku}. By a \emph{plane stratum}, we mean a stratum consisting of one slice.

\begin{corollary}\label{dek}
The problem \eqref{zzzz} contains the problem of classifying $(1,2)$-tensors.
\end{corollary}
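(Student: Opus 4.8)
The plan is to realize the problem of classifying $(1,2)$-tensors --- i.e.\ classifying $3$-dimensional arrays $\u A=[a_{ijk}]$ of size $d\times d\times d$ up to $(S,S,S^{-T})$-transformations --- as a special case of the block-equivalence problem, and then invoke Theorem~\ref{hep} (via Corollary~\ref{ddr}) to pass from block-equivalence to ordinary equivalence. So the real content is: \emph{build a partitioned array $\u P^A$, with one spatial block equal to $\u A$ and all other blocks consisting of $0$'s and $1$'s, such that $\u A$ and $\u B$ are related by an $(S,S,S^{-T})$-transformation if and only if $\u P^A$ and $\u P^B$ are block-equivalent.} Here the three "axis" matrices of the block-equivalence transformation are each block-diagonal, and the forced structure of the $0/1$ blocks must (a) force the three diagonal blocks acting on the $\u A$-stratum to be $S$, $S$, and $S^{-T}$ for a common nonsingular $S$, and (b) conversely allow any such triple to be realized.

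The key construction mirrors the gadgets already used in Example~\ref{mkw} and in the proof of Corollary~\ref{dek12}. First I would attach, alongside the $\u A$-block, auxiliary $0/1$ spatial blocks that are "identity gadgets" in each of the three coordinate directions, chosen so that: linear reconstructions of the frontal strata are forced to act by a matrix $U$ with $U=S^{-T}$ where $S$ is the matrix acting (on the right, via multiplication $A_k\mapsto R^TA_kS$) on the horizontal/lateral strata of $\u A$; and simultaneously the matrix $R$ acting on the horizontal strata is forced to equal $S$. Concretely, one includes a block that is (a copy of) the identity array of the graph \eqref{fhdq}-type gadget so that an equality of the form $S^TI S'= I$ is forced between two of the three transformation matrices — exactly as the equations $S_1^TI_tU_\ell r = I_t$ in the proof of Corollary~\ref{dek12} forced $U_1=\dots=U_p$. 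Repeating this in each coordinate pair pins down $R=S$, and then a third such equation, involving the contragredient pattern, pins down $U = S^{-T}$. Once the forced form of the triple is established, the $\Longrightarrow$ direction is immediate (any $(S,S,S^{-T})$ applied to $\u A$ extends to a block-equivalence of $\u P^A$ fixing the $0/1$ blocks), and the $\Longleftarrow$ direction follows by first using the rank/Krull--Schmidt argument of Section~\ref{gfs3} to show the transformation is block-diagonal of the right shape, and then reading off that its restriction to the $\u A$-stratum is of the required form.

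After establishing that the block-equivalence problem for these particular partitioned arrays is equivalent to the $(1,2)$-tensor problem, I would invoke Theorem~\ref{hep}: it provides a further partitioned array $\u F(\u P^A)$ whose $0/1$ structure encodes the partition of $\u P^A$, such that $\u P^A$ and $\u P^B$ are block-equivalent iff $\u F(\u P^A)$ and $\u F(\u P^B)$ are equivalent as unpartitioned arrays. Composing, the array $\u F(\u P^A)$ (still having $\u A$ as one spatial block and $0$'s and $1$'s elsewhere) witnesses the containment in the sense of Definition~\ref{cbd}. One should check the affine/uniform-$p$-projection bookkeeping, but that is routine given the earlier remarks.

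The main obstacle I expect is the \emph{contragredient} relation: forcing $U=S^{-T}$ rather than just $U=S$ is subtler than the equalities appearing in Corollary~\ref{dek12}, because a relation like $S^T I U = I$ produces $U = S^{-T}$ only if the relevant identity gadget genuinely behaves like the bilinear-form/linear-operator gadget of \eqref{fdq}, and one must be careful that no spurious extra flexibility (from the freedom in choosing $R$, or from off-diagonal parts of the block-equivalence matrices) creeps in. Getting the ranks of the $0/1$ padding blocks large enough (as in the choice $r:=\min\{m,n\}+1$ in the proof of Lemma~\ref{jhp}) so that the Krull--Schmidt cancellation applies, while keeping the gadgets small enough that the forced equations say exactly what is wanted, is the delicate point; everything else is assembling pieces that already appear in Sections~\ref{ss2}--\ref{sss3}.
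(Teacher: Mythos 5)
Your proposal follows essentially the paper's own route: attach identity plane strata to $\u A$ (the paper's $\u K^A$ with $\u K^A_{121}=\u K^A_{211}=I_n$) so that any block-equivalence $(R\oplus[r],S\oplus[s],U)$ forces $R^TI_nUs=I_n$ and $S^TI_nUr=I_n$, hence $R,S,U$ form an $(S,S,S^{-T})$-triple after a scalar rescaling, and then invoke Theorem \ref{hep} to pass from block-equivalence to unpartitioned equivalence. The only corrections are that in the block-equivalence stage no rank/Krull--Schmidt argument and no choice of large padding ranks are needed (block-equivalence transformations are block-diagonal by definition, and those issues live entirely inside the proof of Theorem \ref{hep}, which you use as a black box), and that the thickness-one strata introduce the scalars $[r],[s]$, so the gadget equations pin the triple down only up to scalars and one finishes with the multilinear rescaling $(Rr^{-1},Rr^{-1},(Rr^{-1})^{-T}):\u A\too\u B$ as in the paper.
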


\begin{proof}
Due to Theorem \ref{hep}, it suffices to prove that the second problem is contained in the problem of classifying partitioned three-dimensional
arrays up to block-equivalence.

For each unpartitioned array $\u A$ of size $n\times n\times n$, define the partitioned array
\begin{equation}\label{lje}
\begin{split}
 \u K^A=[\u K^A_{\alpha \beta\gamma }]_{\alpha =1}^{2}{}_{\beta =1}^{2}{}_{\gamma=1}^{1}:=
\quad\xymatrix@=0pt
{
&&&*{}&&&&&&*{}&&&&*{}\\
\\
\\
*{}
\ar@{-}'[rrruuu][rrrrrrrrruuu]
&&&&&&*{}&&&&*{}
\ar@{-}[rrruuu]
\ar@{-}[dddddd]
\ar@{-}[rrrddd]|{I_n\vphantom{A^A_A}}\\
\\
\\
&&&&&&&&&*{}&&&&*{}\\
\\
\\
*{}
\ar@{}[rrrrrruuuuuu]|{\underline {\pmb A}}
\ar@{-}'[rrrrrr]'[rrrrrruuuuuu]
'[uuuuuu][]
&&&&&&*{}
\ar@{-}'[rrruuu]'[rrruuuuuuuuu][uuuuuu]
&&&&*{}
\ar@{-}'[rrruuu][rrruuuuuuuuu]
\\
&&&*{}&&&&&&*{}\\
\\
\\
*{}\ar@{-}[rrrrrr]
\ar@{-}[rrruuu]
\ar@{-}[rrrrrrrrruuu]|{\,I_n\vphantom{A^A_A}}
&&&&&&*{}
\ar@{-}'[uuurrr][uuulll]
\\
}
\end{split}
\end{equation}
of size $(n+1)\times (n+1)\times n$. It is obtained from $\u A$ by attaching under it and on the right of it the plane strata that are the identity matrices:
\[
\u K^A_{111}=\u A,\quad \u K^A_{121}=I_n
,\quad \u K^A_{211}=I_n,\quad \u K^A_{221}=0
\]
(the diagonal lines in $\u K^A_{121}$
and $\u K^A_{211}$ denote the main
diagonal of $I_n$ consisting of units).
Let $\u B$ be another unpartitioned
array of size $n\times n\times n$, and
let $\u K^{A}$ and $\u K^{B}$
be block-equivalent. This means that there exists
\[(R\oplus [r],S\oplus [s],U): \u K^{A} \too \u K^{B};\] that is,
\[
\xymatrix@=0pt
{
&&&*{}&&&&&&*{}&&&&*{}\\
\\
\\
*{}
\ar@{-}'[rrruuu][rrrrrrrrruuu]
&&&&&&*{}&&&&*{}
\ar@{-}[rrruuu]|{\vphantom{A^A_A}U\vphantom{A^A_A}}
\ar@{-}[dddddd]|{R\vphantom{A^A_A}}
\ar@{-}[rrrddd]|{\vphantom{A^A_A}I_n\vphantom{A^A_A}}\\
\\
\\
&&&&&&&&&*{}&&&&*{}\\
\\
\\
*{}
\ar@{}[rrrrrruuuuuu]|{\underline {\pmb A}}
\ar@{-}'[rrrrrr]'[rrrrrruuuuuu]
'[uuuuuu][]
&&&&&&*{}
\ar@{-}'[rrruuu]'[rrruuuuuuuuu][uuuuuu]
&&&&*{}
\ar@{-}'[rrruuu][rrruuuuuuuuu]
\\
&&&*{}&&&&&&*{}\\
\\
\\
*{}\ar@{-}[rrrrrr]|{S}
\ar@{-}[rrruuu]|{U\vphantom{A^A_A}}
\ar@{-}[rrrrrrrrruuu]|
{\,I_n\vphantom{A^A_A}}
&&&&&&*{}
\ar@{-}'[uuurrr][uuulll]
\\
}%%%%%%%%%%%%%%%%%%%%%%%%%%%%%%%%%%%%%%%%%%%%%%%
                                  \quad
\xymatrix@=10pt{
\\ \\
\too\\
}
                     \ \
%%%%%%%%%%%%%%%%%%%%%%%%%%%%%%%%%%%%%%%%%%%%%%%%
\xymatrix@=0pt
{
&&&*{}&&&&&&*{}&&&&*{}\\
\\
\\
*{}
\ar@{-}'[rrruuu][rrrrrrrrruuu]
&&&&&&*{}&&&&*{}
\ar@{-}[rrruuu]
\ar@{-}[dddddd]
\ar@{-}[rrrddd]|{\vphantom{A^A_A}I_n\vphantom{A^A_A}}\\
\\
\\
&&&&&&&&&*{}&&&&*{}\\
\\
\\
*{}
\ar@{}[rrrrrruuuuuu]|{\underline {\pmb B}}
\ar@{-}'[rrrrrr]'[rrrrrruuuuuu]
'[uuuuuu][]
&&&&&&*{}
\ar@{-}'[rrruuu]'[rrruuuuuuuuu][uuuuuu]
&&&&*{}
\ar@{-}'[rrruuu][rrruuuuuuuuu]
\\
&&&*{}&&&&&&*{}\\
\\
\\
*{}\ar@{-}[rrrrrr]
\ar@{-}[rrruuu]
\ar@{-}[rrrrrrrrruuu]|{\,I_n\vphantom{A^A_A}}
&&&&&&*{}
\ar@{-}'[uuurrr][uuulll]
\\
}
\]
Equating the spatial blocks
$(1,2,1)$ and
$(2,1,1)$, we get $R^TI_nUs=I_n$ and $S^TI_nUr=I_n$. Hence $U^{-T}=Rs=Sr$, and so
$(R,S,U)=(R,Rsr^{-1},R^{-T}s^{-1}):\u A\too \u B$. Thus, $(R,Rr^{-1},R^{-T}):\u A\too \u B$, and so $(Rr^{-1},Rr^{-1},(Rr^{-1})^{-T}):\u A\too \u B$.
\end{proof}

\begin{corollary}\label{dek1}
The problem \eqref{zzzz} contains the problem of classifying $(0,3)$-tensors.
\end{corollary}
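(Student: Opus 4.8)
The plan is to imitate the proof of Corollary~\ref{dek}. By Theorem~\ref{hep} it suffices to show that the problem of classifying $n\times n\times n$ arrays up to $(S,S,S)$-transformations is contained in the problem of classifying partitioned three-dimensional arrays up to block-equivalence. In the $(1,2)$-tensor case two identity plane strata forced, from an arbitrary block-equivalence $(R\oplus[r],\,S\oplus[s],\,U)$ of the enlarged array, the relations $R^TU\propto I_n$ and $S^TU\propto I_n$, hence $(R,S,U)\propto(R,R,R^{-T})$ on the block carrying the array. For $(0,3)$-tensors I must instead force $R\propto S\propto U$ with no contragredient slot, and---this is the genuinely new point over an arbitrary field---I must pin down the residual scalar so that $\u A$ is carried to $\u B$ by an actual $(S,S,S)$-transformation and not merely by one followed by a dilation.

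First I would enlarge $\u A$ to a partitioned array $\u K^A$ by giving the first index-direction the block sizes $(n,1)$ and the second and third directions the block sizes $(n,n,1)$; the variable block $\u X=\u A$ occupies the position where all three indices lie in the first (size $n$) block, while the two extra size-$n$ blocks on the second and third directions serve as auxiliary ``intermediate indices'' with transformation matrices $Q$ and $P$. I then attach four identity plane strata, forming a chain through the hub $R$: an $I_n$ joining the first-direction block to the auxiliary block $P$ (placed in the size-$1$ slot of the second direction), an $I_n$ joining $P$ to the second-direction block (placed in the size-$1$ slot of the first direction), an $I_n$ joining the first-direction block to the auxiliary block $Q$ (placed in the size-$1$ slot of the third direction), and an $I_n$ joining $Q$ to the third-direction block (placed in the size-$1$ slot of the first direction). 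Every remaining entry is $0$, except a single entry equal to $1$ at the corner where all three indices lie in the size-$1$ blocks; thus all entries outside $\u X$ are $0$ or $1$, as required by condition~(ii) of Theorem~\ref{hep}.

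Next I would repeat the computation of \eqref{deo}--\eqref{fwl}. If $\u K^A$ and $\u K^B$ are block-equivalent, the transformation is block-diagonal with the prescribed block sizes, say $R\oplus[c_1]$ on the first direction, $S\oplus Q\oplus[c_2]$ on the second, and $U\oplus P\oplus[c_3]$ on the third. Equating the four identity strata gives $c_2R^TP=I_n$, $c_1S^TP=I_n$, $c_3R^TQ=I_n$, $c_1Q^TU=I_n$, and equating the corner entry gives $c_1c_2c_3=1$. The first two relations yield $S=(c_2/c_1)R$, the last two yield $U=(c_3/c_1)R$, so the transformation induced on the block $\u X$ is $(R,\,\alpha R,\,\beta R)$ with $\alpha\beta=c_2c_3/c_1^{2}=c_1^{-3}$; hence $\u B$ is obtained from $\u A$ by the $(S,S,S)$-transformation with $S=c_1^{-1}R$. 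Conversely, if $\u B$ is obtained from $\u A$ by a transformation $(T,T,T)$, then the block-diagonal maps carrying $T$ on each array-block, $T^{-T}$ on each auxiliary block, and $1$ on each size-$1$ block constitute a block-equivalence $\u K^A\too\u K^B$. Therefore $\u A$ and $\u B$ are $(S,S,S)$-equivalent if and only if $\u K^A$ and $\u K^B$ are block-equivalent, and Theorem~\ref{hep} finishes the proof.

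The step I expect to be the real obstacle is exactly this scalar bookkeeping. In the $(1,2)$-tensor case the contragredient slot makes the residual ambiguity linear---the triple $(\gamma R,\gamma R,\gamma^{-1}R^{-T})$ dilates the array by $\gamma$, so every dilation is already realized by an admissible transformation and nothing further is needed. Here the naive construction would dilate the array by $\gamma^{3}$, which over a field without cube roots is a genuine obstruction, and the single anchoring entry forcing $c_1c_2c_3=1$ is precisely what turns this cube back into an admissible dilation. One has only to verify that the anchor does not damage the forward implication, and it does not, since $c_1=c_2=c_3=1$ is always an admissible choice.
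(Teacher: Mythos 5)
Your proposal is correct and is essentially the paper's own construction: your enlarged array with first-direction blocks $(n,1)$, second- and third-direction blocks $(n,n,1)$, four identity plane strata and the corner entry $1$ is exactly the paper's $\u N^A$ (your $Q,P,c_1,c_2,c_3$ are the paper's $S_2,U_2,r,s,u$), and your relations $c_2R^TP=I_n$, $c_1S^TP=I_n$, $c_3R^TQ=I_n$, $c_1Q^TU=I_n$, $c_1c_2c_3=1$ reproduce the paper's derivation that $(Rr^{-1},Rr^{-1},Rr^{-1}):\u A\too\u B$. Your closing observation about anchoring the residual scalar (the cube obstruction over fields without cube roots) is precisely the role the corner block $I_1$ plays in the paper, distinguishing this case from the $(1,2)$-tensor construction.
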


\begin{proof}
For each unpartitioned array $\u A$ of
size $n\times n\times n$, define the
array
\[
\u L^A:=\quad
\xymatrix@R=7pt@C=7pt{
*{}&&*{}&&*{}&&*{}&\\
&*{}\ar@{}[rrru]|{{\underline {\pmb Q}}}&&&&*{}\ar@{.}'[ru][ul]&\\
*{}&&*{}
&&*{}\ar@{-}[ur]&*{}&*{}&{\smash{\quad\text{with $\u P=\u Q=0$}}}\\
&&&&&*{}\ar@{.}'[ur][uuur]&&
&\\
*{}\ar@{}[rruu]|{\underline {\pmb A}}
\ar@{-}'[rrrr]'[rrrruu]'[uu][]
&&\ar@{}[rruu]|{\underline {\pmb P}}
\ar@{-}'[uu]'[uuuurr]'[uuuu][uull]
&&*{}\ar@{-}'[ru]'[ruuu][uuulll]&\\
}\]
of size $n\times 2n\times 2n$
partitioned into $1\cdot 2\cdot 2$
spatial blocks.
We attach the plane strata to the right of $\u L^A$, under it, and behind it. All blocks of the new plane strata are zero except for the identity
matrices $I_n$ under and directly behind of
$\u P$ and under and on the right of
$\u Q$, and also except for the matrix $I_1$ at the intersection of these planes:
\begin{equation}\label{gfl}
\begin{split}
\u N^A:=\xymatrix@R=7pt@C=14pt{
*{}&&*{}&&*{}&*{}&*{}&*{}
\ar@{-}'[dd]&&*{}
           \\
*{}&&*{}&&*{}&&*{}&&&*{}
           \\
*{}&&&*{}
&&*{}\ar@{-}'[rrrr]'[rrrruu]'[uu][]&&*{}
\ar@{-}[rruu]|{I_n}&&*{}\ar@{.}[rrr]&&&{I_1}
           \\
*{}&&*{}&&*{}&&*{}&&&*{}
           \\
&*{}\ar@{}[rrru]|{{\underline {\pmb Q}}}&&&&*{}\ar@{-}'[ru][ul]&&&*{}
\ar@{-}[rd]|{I_n}&
         \\
*{}&&*{}
&&*{}\ar@{-}[ur]&*{}&*{}&*{}&&*{}
\ar@{.}[uuurrr]
         \\
&&&&&*{}\ar@{-}'[ur][uuur]&&&*{}\ar@{-}[uu]&
        \\
*{}\ar@{}[rruu]|{\underline {\pmb A}}
\ar@{-}'[rrrr]'[rrrruu]'[uu][]
&&*{}
\ar@{}[rruu]|{\underline {\pmb P}}
\ar@{-}'[uu]'[uuuurr]'[uuuu][uull]
&&*{}\ar@{-}'[ru]'[ruuu][uuulll]
&&&*{}
\ar@{-}'[uu]'[uuuurr]'[uurr][]
&&
         \\
*{}&&*{}&&*{}&&*{}
           \\
&*{}\ar@{-}[rrru]|{I_n}\ar@{-}[rrrr]&&&&*{}
         \\
*{}&&*{}\ar@{-}[rruu]\ar@{-}[rrru]|{I_n}
&&*{}\ar@{-}'[uurr]'[uull]'[llll][]&*{}&*{}
}
 \end{split}
\end{equation}
The obtained array $\u N^A$ is partitioned into $2\cdot 3\cdot 3$ spatial blocks.

Let $\u B$ be another unpartitioned
array of size $n\times n\times n$. Let $\u N^{A}$ and $\u N^{B}$
be block-equivalent:
\[
(R\oplus[r],S_1\oplus S_2\oplus [s],
U_1\oplus U_2\oplus [u]): \u N^{A} \too \u N^{B}.
\]
Equating the spatial blocks with $I_n$ and $I_1$, we get
\[
S_1^TU_2r=I_n,\quad S_2^TU_1r=I_n,\quad R^TU_2s=I_n,\quad R^TS_2u=I_n,\quad rsu=1.
\]
Hence
\[
S_1=U_2^{-T}r^{-1}=Rr^{-1}s
, \qquad
U_1=S_2^{-T}r^{-1}=Rr^{-1}u
\]
and we have
\[
(R,S_1,U_1)=(Rr^{-1}r,Rr^{-1}s,Rr^{-1}u):\u{A} \too \u{B}.
\]
Since $rsu=1$, $(Rr^{-1},Rr^{-1},Rr^{-1}):\u{A} \too \u{B}$.
\end{proof}

\section{Proof of the statement (\ref{kjj})} \label{sss4}

Each block-equivalence transformation of a partitioned three-dimensional array
$ \u A$
has the form
\begin{equation}\label{dsy1}
(R,S,U)=(R_1\oplus\dots\oplus R_{\o m},\
S_1\oplus\dots\oplus S_{\o n},\
U_1\oplus\dots\oplus U_{\o t}):\u A \too\u B.
\end{equation}
In this section, we consider a special case of these transformations: some of the diagonal blocks in $R,S,U$ are claimed to be equal or mutually contragredient.

Let us give formal definitions. Consider a finite set $\mathcal P$ with two relations $\sim$ and $\Join$ satisfying the following
conditions:
\begin{itemize}
\item[(i)] $\sim$ is an equivalence relation, \label{page}
  \item[(ii)] if $a\Join b$, then $a\nsim b$,
  \item[(iii)] if $a\Join b$, then $b\Join c$ if and only if $a\sim c$.
\end{itemize}
Taking $a=c$ in (iii), we obtain that $a\Join b$ implies $b\Join a$.

It is clear that the relation  $\Join$ can be extended to the set $\mathcal P\!/\!\!\sim$ of equivalence classes such that $a\Join b$ if and only if $[a]\Join [b]$, where $[a]$ and $[b]$ are the equivalence classes of $a$ and $b$.
Moreover, if an equivalence relation $\sim$ on $\mathcal P$ is fixed and $*$ is any involutive mapping on $\mathcal P\!/\!\!\sim$ (i.e., $[a]^{**}=[a]$ for each $[a]\in\mathcal P\!/\!\!\sim$), then the relation $\Join$ defined on $\mathcal P$ as follows:
\[
a\Join b\quad\Longleftrightarrow\quad [a]\ne [a]^*=[b]
\]
satisfies (ii) and (iii), and each relation $\Join$ satisfying (ii) and (iii) can be so obtained.

Let
\begin{equation}\label{jdp}
 \mathcal P:=\{1,\dots,\bar m;1',\dots,\bar n';1'',\dots,\bar t''\}
\end{equation}
be the disjoint union of the set of first indices, the set of second indices, and the set of third indices of $ \u A=[\u
A_{\alpha \beta \gamma }]_{\alpha
=1}^{\o m}\,{}_{\beta=1}^{\o
n}\,{}_{\gamma=1}^{\o t}$. Since these sets correspond to nonintersecting subsets of $\mathcal P$, we can denote all transforming matrices in \eqref{dsy1} by the same letter:
\begin{equation*}\label{mmu}
(S_1\oplus\dots\oplus S_{\o m},\ S_{1'}\oplus \dots\oplus S_{\o n'},\
S_{1''}\oplus\dots\oplus S_{\o t''}):\u A \too\u B,
\end{equation*}
and give the partition of $\u A$ by the sequence
\begin{equation}\label{hge}
\u d:=(d_1,\dots,d_{\o m};\ d_{1'},\dots,d_{\o n'};\ d_{1''},\dots,d_{\o t''})
\end{equation}
(in which the semicolons separate the sets of sizes of
frontal, lateral, and horizontal strata)
such that the size of each $\u A_{ijk}$ is $d_i\times d_{j'}\times d_{k''}$.

Let $\sim$ and $\Join$ be binary relations on \eqref{jdp} satisfying (i)--(iii). Let the partition \eqref{hge} of $ \u A=[\u
A_{\alpha \beta \gamma }]_{\alpha
=1}^{\o m}\,{}_{\beta=1}^{\o
n}\,{}_{\gamma=1}^{\o t}$ satisfies the condition:
\begin{equation}\label{chp}
\text{$d_{\alpha }=d_{\beta }$ if $\alpha \sim\beta $ or $\alpha \Join\beta $\qquad for all $\alpha,\beta \in\cal P$.}
\end{equation}
We say that \eqref{jdp} is a \emph{linked block-equivalence transformation} if the following two conditions hold for all $\alpha,\beta \in\cal P$:
\begin{equation}\label{ghq}
\text{$S_{\alpha }=S_{\beta }$ if $\alpha \sim\beta $,\qquad
$S_{\alpha }=S_{\beta }^{-T}$ if $\alpha \Join\beta $.}
\end{equation}

It is convenient to give the relations $\sim$ and $\Join$ on $\cal P$ by
\begin{equation}
\label{gra}
\parbox[c]{0.8\textwidth}{the graph $Q$ with the set of vertices $\cal P$ and with two types of arrows: two vertices $\alpha $ and $\beta $ are linked by a solid line if $\alpha \sim\beta $ and $\alpha \ne\beta $, and by a dotted line if $\alpha \Join\beta $.
}
\end{equation}
\begin{example}
The graphs
 \[
\xymatrix{
1\ar@{-}[r]\ar@/^1pc/@{.}[rr]
&1'\ar@{.}[r]&1''&\text{and}&
1\ar@{-}[r]\ar@/^1pc/@{-}[rr]
&1'\ar@{-}[r]&1''
\\
}
\]
give the problems of classifying partitioned arrays consisting of a single spatial block $\u A_{111}$ up to $(S,S,S^{-T})$-transformations and up to $(S,S,S)$-transformations, respectively; that is, the problems of classifying $(1,2)$-tensors and $(0,3)$-tensors.
\end{example}

\begin{example}\label{bbn}
The graph
\[
\xymatrix@R=14pt@C=14pt{
1\ar@{.}[d]&1'&1''\ar@{.}[ld]\\
2&2'
\\}
\]
gives the problem of classifying partitioned arrays $\u A=[\u A_{\alpha \beta 1}]$ consisting of $2\cdot 2\cdot 1$ spatial blocks up to $(R\oplus R^{-T},S\oplus U^{-T},U)$-transformations; that is, up to linked block-equivalence transformations
\[
\xymatrix@R=7pt@C=14pt{
&*{}\ar@{-}[rrrr]&&*{}&&*{}\\
*{}\ar@{-}[ur]|{\,U\,}
\ar@{-}[rr]|{\,S\,}
&&*{}
\ar@{-}[rr]|{\;U^{-T}}&&*{}\\
&&&&&*{}\\
*{}\ar@{}[rruu]|{\pmb{\u A_{111}}}\ar@{-}[uu]|{R\vphantom{A^A_A}}
\ar@{-}'[rrrr][rrrrru]&&*{}
\ar@{}[rruu]|{\pmb{\u A_{121}}}
&&*{}\\
&&&&&*{}\\
*{}\ar@{}[rruu]|{\pmb{\u A_{211}}}
\ar@{-}[uu]|{R^{-T}}\ar@{-}'[rrrr]'[rrrruuuu]
&&*{}\ar@{}[rruu]|{\pmb{\u A_{221}}}
\ar@{-}'[uuuu][uuuuur]&&*{}\ar@{-}'[ru]'[ruuuuu][uuuu]\\
}
 \]
This problem can be considered as the problem of classifying array-representations
$\mathcal A=(\u A_{111},\u A_{121},\u A_{211},\u A_{221})$ of the directed bipartite
graph
\[
\xymatrix@R=-6pt@C=10ex{
t_{111}
  \ar@/^/@{<-}[rd]
  \ar@/^/@{<-}[rddd]
  \ar@/^/@{<-}[rddddd]\\
&{r\quad}
\\
t_{121}
  \ar@/_3pt/@{<-}[ru]
  \ar@<-0.5ex>@/^9pt/@{->}[rddd]
  \ar@<-0.5ex>@/^5pt/@{<-}[rddd]\\
&{s\quad }
\\
t_{211}
  \ar@/_3pt/@{->}[ruuu]
  \ar@<-0.3ex>@{<-}[ru]
  \ar@<-0.5ex>@/^3pt/@{<-}[rd]
\\
&{u\quad }
\\
t_{221}
  \ar@/_9pt/@{->}[ruuuuu]
  \ar@/_1pt/@{->}[ru]
  \ar@<-0.1ex>@/_/@{<-}[ru]
\\
}
\]
\end{example}

We prove the statement \eqref{kjj} in the following theorem. Its array $\u H({\u X})$ generalizes the arrays \eqref{lje} and \eqref{gfl}.

\begin{theorem}\label{kts}
Let $ \mathcal P:=\{1,\dots,\bar m;1',\dots,\bar n';1'',\dots,\bar t''\}
$ be a set with binary relations $\sim$ and $\Join$ satisfying the conditions {\rm(i)--(iii)} from the beginning of Section \ref{sss4}. Let $ \u X=[\u
X_{\alpha \beta \gamma }]_{\alpha
=1}^{\o m}\,{}_{\beta=1}^{\o
n}\,{}_{\gamma=1}^{\o t}$ be a variable array whose entries are independent parameters and
whose partition into spatial blocks is given by some sequence \eqref{hge} satisfying \eqref{chp}.

Then there exists a partitioned array $\u H({\u X})=[\u H_{\alpha \beta \gamma }]_{\alpha
=1}^{\Bar{\Bar m}}\,{}_{\beta=1}^{\Bar{\Bar
n}}\,{}_{\gamma=1}^{\Bar{\Bar t}}$ in which
 $\o m< \Bar{\Bar m}$, $\o n< \Bar{\Bar n}$, $\o t< \Bar{\Bar t}$;
\begin{itemize}
  \item[\rm(a)] $\u X$ is the subarray of $\u H({\u X})$ located at the first $\o m\cdot\o n\cdot\o t$ spatial blocks  (i.e.
  $\u H_{\alpha \beta \gamma }=\u X_{\alpha \beta \gamma }$ if $\alpha\le\o m$, $\beta\le\o n$, and $\gamma\le\o t$);

  \item[\rm(b)] $\u H_{\Bar{\Bar m}\Bar{\Bar n}\Bar{\Bar t}}=[1]$ is of size $1\times 1\times 1$, and the other spatial blocks outside of $\u X$ are zero except for some of $\u H_{\Bar{\Bar m} \beta \gamma}$, $\u H_{\alpha \Bar{\Bar n} \gamma}$, $\u H_{\alpha \beta \Bar{\Bar t}}$
      that are the identity matrices
\end{itemize}
such that two three-dimensional arrays $\u
A$ and $\u B$ over a field, partitioned conformally to $\u X$, are linked block-equivalent if and only if
$\u H({\u A})$ and $\u H({\u B})$ are block-equivalent.
Note that the disposition of the identity matrices outside of $\u X$ (see {\rm(b)}) depends only on  $ (\mathcal P,\sim,\Join)$.
\end{theorem}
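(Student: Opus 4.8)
The strategy is to build $\u H(\u X)$ by \emph{gluing} onto $\u X$ a system of identity ``anchor'' slices—one new slab in each of the three directions for every equivalence class of indices in $(\mathcal P,\sim,\Join)$—so that the block-equivalence transformations of $\u H(\u A)$ are forced to respect exactly the linking conditions \eqref{ghq}. Concretely, I would proceed as follows. First, pass to the quotient: let $\mathcal P/\!\!\sim\,=\{c_1,\dots,c_s\}$ and note (as remarked after condition (iii)) that $\Join$ descends to an involution $*$ without fixed points on $\mathcal P/\!\!\sim$. Pick a set of representatives so that the classes come in $*$-pairs $\{c,c^*\}$ together with possibly some classes that are ``self-paired'' only in the sense that $*$ has no fixed points—so in fact \emph{every} class is in a genuine pair $\{c,c^*\}$ with $c\ne c^*$ when $\Join$ is nonempty on it, and is ``$\sim$-only'' (not related to anything by $\Join$) otherwise. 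For each $\sim$-only class $c$ I will add, in each of the three directions, one identity plane stratum that simultaneously overlaps every spatial slab whose index lies in $c$; the argument of Corollary~\ref{dek} shows such a stratum forces all the corresponding diagonal blocks $S_\alpha$ ($\alpha\in c$) to be equal. For each $\Join$-pair $\{c,c^*\}$ I will instead place identity blocks in a ``cross'' pattern, exactly as in the array \eqref{gfl} of Corollary~\ref{dek1}, linking a block indexed by something in $c$ to a block indexed by something in $c^*$; equating spatial blocks there forces $S_\alpha = S_\beta^{-T}$ whenever $\alpha\Join\beta$. The single block $\u H_{\Bar{\Bar m}\Bar{\Bar n}\Bar{\Bar t}}=[1]$ at the far corner plays the role of the entry $I_1$ in \eqref{gfl}: it ties together the scalar ambiguities coming from all the different anchors, so that after the dust settles one gets a \emph{single} transforming matrix (up to a global scalar, which is then absorbed exactly as in the last lines of Corollaries~\ref{dek}--\ref{dek1}).

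The formal proof of the equivalence ``$\u A,\u B$ linked block-equivalent $\iff$ $\u H(\u A),\u H(\u B)$ block-equivalent'' then splits as usual. The direction ($\Rightarrow$) is immediate: a linked block-equivalence $(S_\alpha)$ of $\u A$ extends to a block-equivalence of $\u H(\u A)$ by acting as $S_\alpha$ (resp.\ $S_\alpha^{-T}$, resp.\ identity or scalar) on the new slabs, and one checks the identity anchors are preserved precisely because \eqref{ghq} holds. For ($\Leftarrow$), suppose $(R,S,U):\u H(\u A)\too\u H(\u B)$ is a block-equivalence, with $R=\bigoplus R_\alpha$, $S=\bigoplus S_{\beta'}$, $U=\bigoplus U_{\gamma''}$ over \emph{all} indices of $\u H$, old and new. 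Equating the spatial blocks of $\u H(\u A)$ and $\u H(\u B)$ at the anchor positions yields, by the same computation as in Corollaries~\ref{dek},~\ref{dek1} (each identity block gives a relation of the form $X^T I Y = I$, i.e.\ $Y = X^{-T}$), a chain of identities among the diagonal blocks. Tracking this chain through the graph $Q$ of \eqref{gra}: along a solid edge $\alpha\sim\beta$ the two anchors force the block at $\alpha$ and the block at $\beta$ to be equal (via the common ``new'' block on that slab); along a dotted edge $\alpha\Join\beta$ they force them to be mutually contragredient; and the corner block $[1]$ forces the accumulated scalar to be $1$ around every cycle. After normalizing by the resulting global scalar, one reads off that the restriction of $(R,S,U)$ to the old indices is exactly a linked block-equivalence $\u A\too\u B$. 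Properties (a) and (b) hold by construction, and the position of the anchors depends only on $(\mathcal P,\sim,\Join)$—not on $\u X$—as required.

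Two points need care, and I expect the \textbf{bookkeeping of the scalars through the graph $Q$} to be the main obstacle. (1) The ``rank trick'' used in Theorem~\ref{hep} and Section~\ref{gfs} to show a $6\times6$ reconstruction matrix $S$ splits blockwise is \emph{not} available here, because we are already working up to block-equivalence (we have thrown away the unpartitioned-to-partitioned step); instead we genuinely need the equality-of-spatial-blocks relations at the anchor positions to pin down the off-$\u X$ parts of $R,S,U$ and to kill any ``mixing'' between the anchor slabs and $\u X$. Arranging the anchors so that each is an identity matrix of a size large enough (or in a position isolated enough—e.g.\ with surrounding zero blocks as in \eqref{lje},\eqref{gfl}) to force this non-mixing is the technical heart. (2) When $\Join$ relates an index to \emph{several} others in the same $*$-class, one must check the cross-anchors are mutually consistent, i.e.\ that the relations $S_\alpha = S_\beta^{-T}$ they impose do not over-determine anything—this is exactly where conditions (ii) and (iii) on $(\sim,\Join)$ are used, and where the reduction to the involution $*$ on $\mathcal P/\!\!\sim$ pays off, since it guarantees the dotted edges of $Q$ form a disjoint union of complete bipartite graphs between $*$-paired classes, which is the only configuration for which a coherent choice of anchors exists. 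Modulo these two points, the argument is a direct generalization of Corollaries~\ref{dek} and~\ref{dek1}, run once for each edge of $Q$.
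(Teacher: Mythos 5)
Your plan follows essentially the same route as the paper: adjoin thin boundary strata carrying identity ``anchor'' blocks together with a $1\times1\times1$ corner block $[1]$, read the linking relations \eqref{ghq} off by equating spatial blocks, and dispose of residual scalars at the end. The differences are in the two points you single out as delicate, one of which is not a difficulty at all and one of which is left unresolved in your outline. The ``non-mixing'' issue that you call the technical heart is vacuous: Theorem \ref{kts} only asserts \emph{block}-equivalence of $\u H(\u A)$ and $\u H(\u B)$ (the passage to unpartitioned equivalence is delegated to Theorem \ref{hep} via Corollary \ref{ddr}), and under a block-equivalence the transforming matrices are block-diagonal by definition, so each spatial block of $\u H(\u B)$ is the transform of the corresponding spatial block of $\u H(\u A)$ by the three relevant diagonal summands; no rank argument, size inflation, or isolation of the anchors is needed.

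The scalar bookkeeping, which you rightly flag but do not settle, is resolved in the paper by a placement constraint your plan omits: every anchor is an identity block having one index in the \emph{final} stratum of one of the three directions, and all three final strata have thickness $1$ (this is exactly condition (b)). The corner block $[1]$ then forces the three $1\times1$ summands $[a],[b],[c]$ to satisfy $abc=1$, and rescaling the three block-diagonal transforming matrices by $a^{-1},b^{-1},c^{-1}$ (still an equivalence because $abc=1$) makes every anchor relation literally $S_{\alpha}^{T}S_{\beta'}=I$, with no stray scalars and no cycle analysis. Relatedly, a single identity block can only tie indices belonging to two \emph{different} directions, so a relation $\alpha\sim\beta$ or $\alpha\Join\beta$ between two indices of the same direction (and any solid edge) cannot be imposed by one anchor; the paper imposes such relations by chains of two or three dotted anchors through fresh auxiliary indices in the other directions, recovering the desired edge from condition (iii) and transitivity (its Cases 2--4), and proceeds edge by edge through the graph $Q$ of \eqref{gra} with an inductive invariant; freshness of the auxiliary indices and the fact that $(\sim,\Join)$ is already closed under (i)--(iii) guarantee no unintended relations among the original indices are created. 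Your ``one stratum overlapping every slab of the class'' gestures at this chaining, but the mechanism needs to be made explicit for your outline to become a proof.
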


  \begin{proof}
Let $\Bar{\Bar m}>\o m$, $\Bar{\Bar n}>\o n$, and $\Bar{\Bar t}>\o t$ be large enough in order to make possible the further arguments. Let $\u X$ be the variable array from the theorem.  Denote by $\u H^0({\u X})=[\u H^0_{\alpha \beta \gamma }]_{\alpha
=1}^{\Bar{\Bar m}}\,{}_{\beta=1}^{\Bar{\Bar
n}}\,{}_{\gamma=1}^{\Bar{\Bar t}}$ the array that satisfies (a) and in which all the spatial blocks outside of $\u X$ are zero except for $\u H^0_{\Bar{\Bar m}\Bar{\Bar n}\Bar{\Bar t}}=[1]$ of size $1\times 1\times 1$.

Let $Q$ be the graph defined in \eqref{gra} that gives the relations $\sim$ and $\Join$ from the theorem. Consider the graph
\begin{equation}\label{kkp}
\begin{split}
\xymatrix@R=0pt@C=7pt{
&1&1'&1''\\\\
 &2&2'&2''\\
\smash{Q^0:\quad } \\
&\vdots&\vdots&\vdots\\ \\
&\Bar{\Bar m}&\Bar{\Bar n}&\Bar{\Bar t}
\\
}
\end{split}
\end{equation}
without edges, whose vertices correspond to the indices of $\u H^0({\u X})$.
Each vertex of $Q$ is the vertex of $Q^0$.
We will consecutively join the edges of $Q$ to $Q^0$ and respectively modify $\u H^0({\u X})$ until obtain $Q^r\supset Q$ and $\u H({\u X}):=\u H^r({\u X})$ satisfying Theorem \ref{kts}.

On each step $k$, we construct
$Q^k$ and $\u H^k({\u X})=[\u H^k_{\alpha \beta \gamma }]$ such that
\begin{itemize}
\item[($1^{\circ}$)]
the conditions (a) and (b) hold with $\u H^k({\u X})$ instead of $\u H({\u X})$, and

\item[($2^{\circ}$)]
for every two arrays $\u A$ and $\u B$ partitioned  conformally to $\u X$, each block-equivalence
\begin{equation}\label{khp}
(S_{1}\oplus\dots\oplus S_{\Bar{\Bar m}},
S_{1'}\oplus\dots\oplus S_{\Bar{\Bar n}'},
S_{1''}\oplus\dots\oplus S_{\Bar{\Bar t}''}): \u H^{k}({\u A})\too \u H^{k}({\u B})
\end{equation}
with $S_{\Bar{\Bar m}}=
S_{\Bar{\Bar n}'}=S_{\Bar{\Bar t}''}=[1]$
satisfies \eqref{ghq} in which the relations $\sim$ and $\Join$ are given by
the graph $Q^k$.
\end{itemize}

If $k=0$, then ($1^{\circ}$) and ($2^{\circ}$) hold since the block-equivalence coincides with the linked block-equivalence with respect to the relations $\sim$ and $\Join$ given by $Q^0$.

Reasoning by induction, we assume that
$Q^k$ and $\u H^k({\u X})$ satisfying ($1^{\circ}$) and ($2^{\circ}$) have been constructed. We construct $Q^{k+1}$ and $\u H^{k+1}({\u X})=[\u H^{k+1}_{\alpha \beta \gamma }]$ as follows.
Let $\lambda $ be  an edge  of $Q$ that does not belong to $Q^k$. Denote by $Q^{k+1}$ the graph obtained from $Q^k$ by joining $\lambda $ and all the edges that appear automatically due to the transitivity of $\sim$ and the condition (iii) from the beginning of this section.

For definiteness, we suppose that $\lambda $ connects a vertex from the first column and a vertex from the first or second column in \eqref{kkp}. The following cases are possible.
\begin{description}
  \item[Case 1:] \emph{$\lambda $  is dotted and connects a vertex from the first column and a vertex from the second column in \eqref{kkp}.}
Let $\lambda :\xymatrix@C=14pt@1{\alpha \ar@{.}[r] & \beta'}$. We replace the spatial block $\u H^k_{\alpha \beta \Bar{\Bar t} }=0$ of size $d_{\alpha }\times d_{\beta '}\times 1$ by the identity matrix: $\u H^{k+1}_{\alpha \beta \Bar{\Bar t }}=I$ ($d_{\alpha }=d_{\beta '}$ by \eqref{chp}); the other spatial blocks of $\u H^k$ and $\u H^{k+1}$ coincide.
Since $\u H^{k+1}_{\alpha \beta \Bar{\Bar t} }=I$ and $S_{\Bar{\Bar t}''}=[1]$, we have $S_{\alpha}^TIS_{\beta'}=I$ in \eqref{khp}. Hence, $S_{\alpha}^{-T}=S_{\beta'}$, which ensures the conditions ($1^{\circ}$) and ($2^{\circ}$) with $k+1$ instead of $k$.

\item[Case 2:] \emph{$\lambda $ is solid and connects a vertex from the first column and a vertex from the second column in \eqref{kkp}.} Let $\lambda :\xymatrix@C=14pt@1{\alpha \ar@{-}[r] & \beta'}$ and let $\gamma ''\in\{\bar t+1,\dots,\Bar{\Bar t}-1\}$ be a vertex from the third column in \eqref{kkp} that does not have arrows (it exists since we have supposed that $\Bar{\Bar m},\Bar{\Bar n},\Bar{\Bar t}$ are large enough). Reasoning as in Case 1, we join the following two dotted arrows to $Q^k$:
\[
\xymatrix@R=-5pt{
{\alpha }\ar@{.}[rrd]\\
&&{\gamma''}\\
&{\beta'}\ar@{.}[ru]
\\
}
\]
Then the solid arrow $\lambda :\xymatrix@C=14pt@1{\alpha \ar@{-}[r] & \beta'}$ is joined automatically by (iii).

\item[Case 3:] \emph{$\lambda $ is solid and connects two vertices from the first column.} Let $\lambda :\xymatrix@C=14pt@1{\alpha \ar@{-}[r] & \beta}$. Reasoning as in Case 2, we join the following two dotted arrows to $Q^k$:
\[
\xymatrix@R=-5pt{
{\alpha }\ar@{.}[rrd]\\
&&{\gamma''}\\
{\beta}\ar@{.}[rru]
\\
}
\]

\item[Case 4:]
\emph{$\lambda $ is dotted and connects two vertices from the first column.} Let $\lambda :\xymatrix@C=14pt@1{\alpha \ar@{.}[r] & \beta}$. Let $\gamma '\in\{\bar n+1,\dots,\Bar{\Bar n}-1\}$ and $\delta''\in\{\bar t+1,\dots,\Bar{\Bar t}-1\}$ be vertices from the second and third columns in \eqref{kkp} that do not have arrows. We join the following  three dotted arrows to $Q^k$:
\[
\xymatrix@R=-5pt{
{\alpha }\ar@{.}[r]&{\gamma '}\ar@{.}[rd]\\
&&{\delta''}\\
{\beta}\ar@{.}[rru]
\\
}
\]
Then the dotted arrow $\lambda :\xymatrix@C=14pt@1{\alpha \ar@{.}[r] & \beta}$ is attached automatically by (iii).
\end{description}

The conditions ($1^{\circ}$) and ($2^{\circ}$) with $k+1$ instead of $k$ hold in all the cases. We repeat this construction until obtain $Q^r\supset Q$.

Let $\u A$ and $\u B$ be three-dimensional arrays partitioned  conformally to $\u X$ such that
$\u H^r({\u A})$ and $\u H^r({\u B})$ are block-equivalent; that is, there exists
\[
(R_{1}\oplus\dots\oplus R_{\Bar{\Bar m}},
R_{1'}\oplus\dots\oplus R_{\Bar{\Bar n}'},
R_{1''}\oplus\dots\oplus R_{\Bar{\Bar t}''}): \u H^r({\u A})\too \u H^r({\u B}).
\]
Since $\u H^r_{\Bar{\Bar m}\Bar{\Bar n}\Bar{\Bar t}}$ is of size $1\times 1\times 1$, the summands $R_{\Bar{\Bar m}}, R_{\Bar{\Bar n}'}, R_{\Bar{\Bar t}''}$ are $1\times 1$. Let $R_{\Bar{\Bar m}}=[a]$, $R_{\Bar{\Bar n}'}=[b]$, and $R_{\Bar{\Bar t}''}=[c]$. Since $\u H^r_{\Bar{\Bar m}\Bar{\Bar n}\Bar{\Bar t}}=[1]$, $abc=1$. Hence,
\[
(a^{-1}(R_{1}\oplus\dots\oplus R_{\Bar{\Bar m}}),
b^{-1}(R_{1'}\oplus\dots\oplus R_{\Bar{\Bar n}'}),
c^{-1}(R_{1''}\oplus\dots\oplus R_{\Bar{\Bar t}''})): \u H^r({\u A})\too \u H^r({\u B}).
\]

Since $a^{-1}R_{\Bar{\Bar m}}=b^{-1}R_{\Bar{\Bar n}'}=c^{-1}R_{\Bar{\Bar t}''}=[1]$, we use ($1^{\circ}$) and ($2^{\circ}$) with $k=r$ and get that
\[
(a^{-1}(R_{1}\oplus\dots\oplus R_{\Bar m}),
b^{-1}(R_{1'}\oplus\dots\oplus R_{\Bar n'}),
c^{-1}(R_{1''}\oplus\dots\oplus R_{\Bar t''})): \u A\too \u B
\]
is a linked block-equivalence with respect to the relations $\sim$ and $\Join$ from Theorem \ref{kts}.
\end{proof}

%Francija 4% vvp, Evropa 2,5%

\section{Proof of Theorem \ref{ktw1}}
\label{sss5}

In this section, we prove  Theorem \ref{ktw1}, which ensures  the statement \eqref{kjt3}.
\begin{lemma}\label{due}
It suffices to prove Theorem \ref{ktw1} for all graphs $G$ in which each left vertex has three arrows.
\end{lemma}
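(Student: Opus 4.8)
The plan is to reduce the general case, where each left vertex has \emph{at most} three arrows, to the special case where each left vertex has \emph{exactly} three arrows, by padding. The idea is the standard one: if a left vertex $t_\alpha$ has $k<3$ arrows, we want to add $3-k$ extra arrows to it in a way that does not change the classification problem. To do this I would introduce, for each deficient vertex, one or two new right vertices of dimension $1$ (or reuse a single auxiliary right vertex), and attach the missing arrows from $t_\alpha$ to them. The associated array $\u X_\alpha$ of size $d_{v_1}\times\dots\times d_{v_k}$ then becomes an array of size $d_{v_1}\times\dots\times d_{v_k}\times 1\times\dots\times 1$, which carries exactly the same data since appending index-ranges of length $1$ is a canonical bijection on arrays (this is precisely the footnote about $0$- and $1$-dimensional spaces, read in the $1$-dimensional direction).

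The key steps, in order: First, given $G$ with left vertices having $\le 3$ arrows and a dimension vector $d$, construct $G'$ by adding auxiliary right vertices and, for each left vertex $t_\alpha$ with $k_\alpha<3$ arrows, adding $3-k_\alpha$ new arrows from $t_\alpha$ to the auxiliary vertices; and define $d'$ to extend $d$ by assigning $1$ to each new vertex. Second, observe that the vector space $\mc W_{d}(G)$ of array-representations of $G$ of dimension $d$ is canonically isomorphic to $\mc W_{d'}(G')$ of dimension $d'$, via the index-padding bijection on each $\u X_\alpha$, and that this bijection is affine (indeed linear). Third, check that under this identification two representations $\cal A,\cal B$ of $G$ are isomorphic if and only if the corresponding representations $\cal A',\cal B'$ of $G'$ are isomorphic: the ``only if'' direction extends a sequence $\mc S$ of transforming matrices by putting $[1]$ at each auxiliary vertex; the ``if'' direction uses that the transforming matrix at a $1$-dimensional vertex is a nonzero scalar $[s]$, and the action of $S^{\pm T}=[s^{\pm 1}]$ on a length-$1$ index just multiplies the whole array by $s^{\pm 1}$ --- but since each original left vertex already carries at least one original arrow, any such scalar can be absorbed into the transforming matrix at an original right vertex, so the scalars at auxiliary vertices impose no extra constraint and may be normalized to $1$ (here one should be a little careful to choose the auxiliary attachments so that the directions of the new arrows do not create an unabsorbable scalar; attaching, say, two new arrows to the same fresh vertex with opposite orientations, or one new arrow whose scalar is absorbed into a chosen incident original right vertex, handles all the parities). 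Fourth, invoke Theorem~\ref{ktw1} for $G'$ (which has all left vertices of degree exactly three) to obtain $\u F({\cal X}')$, and pull it back along the padding isomorphism to get the desired $\u F({\cal X})$ for $G$; the spatial-block structure and the $0/1$ condition on the non-$\u X$ blocks are inherited verbatim.

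The main obstacle I anticipate is the bookkeeping in the third step: ensuring that the scalars introduced at the auxiliary $1$-dimensional right vertices genuinely impose no constraint on the isomorphism relation. Concretely, one must verify that the map sending an isomorphism of $\cal A'$ with $\cal B'$ to one of $\cal A$ with $\cal B$ is well-defined --- i.e. that after normalizing the auxiliary scalars to $1$, the restricted equations \eqref{mbv1} at the original arrows are exactly the defining equations of an isomorphism of the original representations. This is where the choice of how to attach the padding arrows matters, and where the hypothesis ``each left vertex has at most three arrows'' must be used to guarantee every deficient vertex can be padded up to exactly three. I would organize this verification arrow-by-arrow, treating the two possible orientations of each new arrow uniformly via the $\tau_i$-exponent notation already set up in Definition~\ref{aal}. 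Everything else is routine: the padding bijection is visibly affine, and the transfer of the $\u F$-construction is purely formal once the isomorphism-equivalence is established.
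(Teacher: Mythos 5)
There is a genuine gap, and it sits exactly where you anticipated trouble: the left vertices with \emph{exactly two} arrows. For such a vertex $t$ you must add a single new arrow, and attaching it to a fresh $1$-dimensional right vertex $z$ introduces into the isomorphism equation \eqref{mbv1} at $t$ a free nonzero scalar $s^{\pm 1}$ (the transforming ``matrix'' at $z$). With only one new arrow there is no opposite-orientation cancellation available, and your fallback --- absorbing $s$ into the transforming matrix at an incident original right vertex --- does not work. First, that original vertex is in general shared with other left vertices, so rescaling its matrix destroys the isomorphism equations at those tensors. Second, even locally the absorption can be impossible: take $G$ to be the first graph in \eqref{fdq} (a bilinear form), so representations are $n\times n$ matrices classified up to congruence $B=S^TAS$. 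Your padded problem classifies them up to $B=s\,S^TAS$ with $s\in\F$ arbitrary nonzero, because the auxiliary scalar enters linearly while the matrix at the original vertex enters quadratically; over $\F=\mathbb R$ with $n=1$ the arrays $[1]$ and $[-1]$ become isomorphic after padding (take $s=-1$) although they are not congruent. So the padded classification problem is genuinely coarser than the original one, and the ``if'' direction of your third step fails.

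The paper closes exactly this hole with an extra gadget that your construction lacks: besides the new right vertex $q+1$ of dimension $1$, it adjoins a new \emph{left} vertex $t_{p+1}$ joined to $q+1$ by the $(1,2)$-arrow pattern of \eqref{fhdq} (see \eqref{ddi}), and assigns to $t_{p+1}$ the constant $1\times1\times1$ array $[1]$ in every representation. Any isomorphism of the enlarged representations then acts on this block by $([a],[a],[a]^{-T})$, i.e.\ multiplies $[1]$ by $a\cdot a\cdot a^{-1}=a$, which forces the auxiliary scalar $a$ to equal $1$; after that the restriction to the original vertices is literally an isomorphism of the original representations, with no absorption or normalization needed. (Your opposite-orientation trick does handle the one-arrow case correctly, but the paper treats that case too by the same scalar-forcing gadget, applied at two new right vertices.) To repair your argument you would need to add such a forcing device --- or some other mechanism that pins the auxiliary scalars to $1$ --- rather than relying on absorbing them.
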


\begin{proof}
Let $G$ be a directed bipartite
graph with ${\T}=\{t_1,\dots,t_p\}$
and $\V=\{1,\dots,q\}$.

\begin{itemize}
\item[($1^{\circ}$)]
Let $G$ have a left vertex $t$ with exactly two arrows:
\begin{equation}\label{fko}
\begin{split}
\xymatrix@R=0pt@C=50pt{
&u\\
 {t}\ar@{-}[r]
 \ar@{-}@<0.2ex>[ru]&v&
 \smash{\text{\raisebox{5pt}{or}}}
 &{t}\ar@{-}@/^10pt/[r]
 \ar@{-}[r]&{v}
}
\end{split}
\end{equation}
where each line is $\longrightarrow$ or
$\longleftarrow$. Denote by $G'$ the
graph obtained from $G$ by replacing
\eqref{fko} with
\begin{equation}\label{ddi}
\begin{split}
\xymatrix@R=0pt@C=30pt{
&&{u\quad }\\
 {\quad t}\ar@{-}[rr]
 \ar@{-}@<0.2ex>[rru]&&{v\quad }&
\smash{\text{\raisebox{-4pt}{or}}}&{\quad t}\ar@{-}@/^10pt/[rr]
 \ar@{-}[rr]&&{v \quad }
\\
{t_{\smash{p+1}}}\ar@{->}@/_14pt/[rr]
  &&{q\smash{+1}}\ar@{->}@/^/[ll]
  \ar[ll]\ar[llu]&&
{t_{\smash{p+1}}}\ar@{->}@/_14pt/[rr]
  &&{q\smash{+1}}\ar@{->}@/^/[ll]
  \ar[ll]\ar[llu]
}
\end{split}
\end{equation}
\\respectively (thus, $\T'=\{t_1,t_2,\dots,t_{p+1}\}$ and $\V'=\{1,2,\dots,q+1\}$). Let us extend each array-representation $\cal A$ of $G$ to the array-representation ${\cal A}'$ of $G'$ by assigning the $1\times 1\times 1$ array $\u A_{t_{p+1}}':=[1]$  to the new vertex $t_{p+1}$.

Let us prove that ${\cal A}\simeq{\cal
B}$ if and only if ${\cal
A}'\simeq{\cal B}'$. If
$(S_1,\dots,S_q):{\cal A}\too{\cal B}$,
then $(S_1,\dots,S_q,[1]):{\cal
A}'\too{\cal B}'$. Conversely, let
$(S_1,\dots,S_q,[a]):{\cal A}'\too{\cal
B}'$. Then $([a],[a],[a]^{-T}):\u
A_{t_{p+1}}\too\u B_{t_{p+1}}$. Since
$\u A_{t_{p+1}}=\u B_{t_{p+1}}=[1]$, we
have $a=1$, and so
$(S_1,\dots,S_q):{\cal A}\too{\cal B}$.

\item[($2^{\circ}$)]
Let $G$ have a left vertex $t$ with
exactly one arrow: $t\longrightarrow v$
or $t\longleftarrow v$. Then by analogy
with \eqref{ddi} we replace it with
\[
\xymatrix@R=0pt@C=30pt{
{t_{\smash{p+1}}}\ar@{<-}@/^14pt/[rr]
  &&{q\smash{+1}}\ar@{<-}@/_/[ll]
  \ar[ll]\ar[lld]
\\
 {\quad t}\ar@{-}[rr]&&{v\quad}
\\
{t_{\smash{p+2}}}\ar@{<-}@/_14pt/[rr]
  &&{q\smash{+2}}\ar@{<-}@/^/[ll]
  \ar[ll]\ar[llu]
}
\]
in which $t\lin v$ is $t\longrightarrow
v$ or $t\longleftarrow v$.
\end{itemize}

We repeat ($1^{\circ}$) and ($2^{\circ}$) until
extend $G$ to a graph $\widetilde G$ with the set of left vertices $\widetilde{\T}=\{t_1,t_2,\dots,t_{p+p'}\}$ in
which each left vertex has exactly
three arrows. For
array-representations ${\cal A}=(\u
A_1,\dots,\u A_p)$ and ${\cal B}=(\u
B_1,\dots,\u B_p)$ of $G$, define
the array-representations
\begin{equation}\label{lic}
\widetilde{\cal
A}:=(\u A_1,\dots,\u A_p,[1],\dots,[1]),\qquad
\widetilde{\cal
B}:=(\u B_1,\dots,\u B_p,[1],\dots,[1])
\end{equation}
of $\widetilde G$ and obtain that
${\cal A}\simeq{\cal B}$ if and only if
$\widetilde{\cal A}\simeq\widetilde{\cal B}$.

Suppose that Theorem \ref{ktw1} holds for $\widetilde G$  and some
array $\u{\widetilde{F}}(\widetilde{\cal X})$,
in which $\widetilde{\cal X}=(\u X_1,\dots,\u
X_p,[x_{p+1}],\dots,[x_{p+p'}])$ is a
variable array-representation of $\widetilde G$. Substituting the array-representations \eqref{lic}, we find that
$\u{\widetilde{F}}(\widetilde{\cal A})$ is
equivalent to
$\u{\widetilde{F}}(\widetilde{\cal B})$ if and
only if $\widetilde{\cal
A}\simeq\widetilde{\cal B}$,  if and only if
${\cal A}\simeq{\cal B}$. Hence, Theorem \ref{ktw1} holds for $G$  and for the
array $\u{F}({\cal
X}):=\u{\widetilde{F}}(\u X_1,\dots,\u
X_p,[1],\dots,[1])$ with ${\cal X}:=(\u X_1,\dots,\u
X_p)$.
\end{proof}

The \emph{direct sum} $\u A\oplus \u B$ of three-dimensional arrays $\u A$ and $\u B$ is the partitioned array $[\u C_{\alpha \beta }]_{\alpha ,\beta =1}^2$, in which $\u C_{11}:=\u A,\  \u C_{22}:=\u B,$ and the other $\u C_{\alpha \beta }:=0$:
 \[
\u A\oplus\u B:=\xymatrix@R=3pt@C=5pt{
&*{}&&*{}\\
*{}&&*{}\ar@{-}[ru]&&*{}&&*{}
           \\
&&&*{}&&*{}\ar@{-}[ru]
          \\
*{}\ar@{}[rruu]|{\pmb{\u A}}
\ar@{-}'[rr]'[rruu]'[uu][]
&&*{}\ar@{-}'[ru]'[ruuu]'[luuu][lluu]&
&&&*{}
           \\
&&&*{}\ar@{}[rruu]|{\pmb{\u B}}
\ar@{-}'[rr]'[rruu]'[uu][]&&
*{}\ar@{-}'[ru]'[ruuu]'[luuu][lluu]&\\
}
\]

\begin{proof}[Proof of Theorem \ref{ktw1}]
Let $G$ be a directed bipartite graph with
left vertices $t_1,\dots,t_p$  and right
vertices  $1,\dots,q$. Due to
Lemma \ref{due}, we can suppose that
each left vertex of $G$ has exactly three arrows:
\[
\xymatrix@R=-3pt@C=2cm
{
&v_1&&&v_p\\
{t_1}\ar@/^/@{-}[ru]^(0.6){\lambda _1}
\ar@{-}[r]^(0.6){\lambda_{1'}}
\ar@/_/@{-}[rd]^(0.6){\lambda_{1''}}
&v_{1'}&
   \hspace{-1cm}  \dots \hspace{-1cm}
&{t_p}\ar@/^/@{-}[ru]^(0.6){\lambda_p}
\ar@{-}[r]^(0.6){\lambda_{\smash{p'}}}
\ar@/_/@{-}[rd]^(0.6){\lambda_{\smash{p''}}}
&v_{p'}\\
&v_{1''}&&&v_{p''}\\
}
\]

Let
\begin{equation}\label{ccc}
{\cal A}=(\u A_1,\dots,\u A_p),\qquad {\cal B}=(\u B_1,\dots,\u B_p)
\end{equation}
be two array-representations of $G$ of the same size. By Definition \ref{aal}, each isomorphism ${\cal S}:{\cal A}\too {\cal B}$ is given by a sequence
\begin{equation}\label{yyy}
{\cal S}:=(S_1,\dots,S_q)
\end{equation}
 of
nonsingular matrices such that
\begin{equation}\label{mbh}
(S_{v_i}^{\tau_i},S_{v_{i'}}^{\tau_{i'}}, S_{v_{i''}}^{\tau_{i''}}):\u A_i\too \u B_i,\qquad i=1,\dots,p,
\end{equation}
where
     \[
\tau_{i^{(\varepsilon)}}:=\left\{
\begin{array}{ll}
1 & \hbox{if $\lambda _{i^{(\varepsilon)}}: t_i\longleftarrow  v_{i^{(\varepsilon)}}$}
             \\
-T & \hbox{if $\lambda _{i^{(\varepsilon)}}: t_i\longrightarrow  v_{i^{(\varepsilon)}}$}
                \end{array}
              \right.\quad
\text{for $i=1,\dots,p$ and $\varepsilon=0,1,2,$}
\]
in which $i^{(0)}:=i$, $i^{(1)}:=i'$, and $i^{(2)}:=i''$.

The array-representations \eqref{ccc} define the partitioned arrays
\[
\u A:=\u A_1\oplus\dots\oplus\u A_p,\qquad
\u B:=\u B_1\oplus\dots\oplus\u B_p.
\]
The sequence \eqref{yyy} defines the isomorphism
${\cal S}:{\cal A}\too {\cal B}$ if and only if
\eqref{mbh} holds if and only if
\begin{equation}\label{mbb}
(S_{v_1}^{\tau_1}\oplus\dots\oplus
S_{v_p}^{\tau_p},\,
S_{v_{1'}}^{\tau_{1'}}\oplus\dots\oplus
S_{v_{p'}}^{\tau_{p'}},\, S_{v_{1''}}^{\tau_{1''}}\oplus\dots\oplus
S_{v_{p''}}^{\tau_{p''}})
:\u A\too \u B.
\end{equation}

An arbitrary block-equivalence
\begin{equation}\label{aaq}
(R_1\oplus\dots\oplus
R_p,\,
R_{1'}\oplus\dots\oplus
R_{p'},\, R_{1''}\oplus\dots\oplus
R_{p''},\, )
:\u A\too \u B
\end{equation}
has the form \eqref{mbb} if and only if the following two conditions hold:
\begin{itemize}
  \item $R_{i^{(\varepsilon)}}=
R_{j^{(\delta)}}$ if $v_{i^{(\varepsilon)}}=
v_{j^{(\delta)}}$ and $\tau_{i^{(\varepsilon)}}=
\tau_{j^{(\delta)}}$,

  \item $R_{i^{(\varepsilon)}}=
R_{j^{(\delta)}}^{-T}$ if $v_{i^{(\varepsilon)}}=
v_{j^{(\delta)}}$ and $\tau_{i^{(\varepsilon)}}\ne
\tau_{j^{(\delta)}}$;
\end{itemize}
that is, if and only if \eqref{aaq} is a
linked block-equivalence with respect to the relations $\sim$ and $\Join$ on
\begin{equation*}\label{kjq}
{\cal P}:=\{1,\dots,p;1',\dots,p';1'',\dots,p''\}
\end{equation*}
that are defined as follows:
\begin{itemize}
  \item ${i^{(\varepsilon)}}\sim
{j^{(\delta)}}$ if $v_{i^{(\varepsilon)}}=
v_{j^{(\delta)}}$ and $\tau_{i^{(\varepsilon)}}=
\tau_{j^{(\delta)}}$,

  \item ${i^{(\varepsilon)}}\Join
{j^{(\delta)}}$ if $v_{i^{(\varepsilon)}}=
v_{j^{(\delta)}}$ and $\tau_{i^{(\varepsilon)}}\ne
\tau_{j^{(\delta)}}$.
\end{itemize}
Thus, ${\cal A}\simeq {\cal B}$ if and only if $\u A$ and $\u B$ are linked block-equivalent.

Let ${\cal X}=(\u X_1,\dots,\u
X_p)$ be a variable array-representation of $G$ of dimension $d=(d_1,\dots,d_q)$. Define the array $\u K({\cal X}):=\u X_1\oplus\dots\oplus
\u X_p$. Two array-representations $\cal
A$ and $\cal B$ of $G$ of dimension
$d$ are isomorphic if and only if
$\u K({\cal A})$ and $\u K({\cal B})$ are linked block-equivalent. This proves Theorem \ref{ktw1} due to Corollary \ref{ddr} and Theorem \ref{kts}.
\end{proof}

Note that the array $\u K({\cal X})$ in the proof of Theorem \ref{ktw1} is somewhat large. One can construct smaller arrays in most specific cases (as  in Example \ref{bbn}).

\section{Tensor-representations
of directed bipartite graphs}\label{hyu}

Definition \ref{aal} of representations of directed bipartite graphs is given in terms of arrays. In this section, we give an equivalent definition in terms of tensors that are considered as elements of tensor products, which may extend the range of validity of Theorem \ref{ktw1}.

Recall that a \emph{tensor on vector spaces}
$V_1,\dots, V_{m+n}$ over a field $\F$ (some of
them can be equal) is an element of the tensor product
\begin{equation}\label{kke}
{\cal T}\in V_1\otimes\dots\otimes  V_m \otimes  V_{m+1}^*\otimes\dots\otimes  V_{m+n}^*,
\end{equation}
in which $V_i^*$
denotes the dual space consisting of
all linear forms $V_i\to \F$. The
tensor ${\cal T}$ is called \emph{$m$ times
contravariant and $n$ times covariant}, or an \emph{$(m,n)$-tensor}
for short.

Choose a basis $f_{\alpha 1},\dots,f_{\alpha
d_{\alpha }}$ in each space $V_{\alpha
}$ (${\alpha}=1,\dots,m+n$) and take the
\emph{dual basis} $f_{{\alpha }1}^*,
\dots,f_{{\alpha }d_{\alpha}}^*$ in $V_{\alpha}^*$, where
$f_{{\alpha}i}^*:V_i\to \F$ is defined
by $f_{{\alpha}i}^*(f_{{\alpha}i}):=1$
and $f_{{\alpha}i}^*(f_{{\alpha}j}):=0$
if $i\ne j$. The tensor product in \eqref{kke} is the
vector space over $\F$ with the basis
formed by
\[
f_{1i_1}\otimes\dots\otimes f_{mi_m}\otimes f_{m+1,i_{m+1}}^*\otimes\dots\otimes
f_{m+n,i_{m+n}}^*,
\]
and so each tensor \eqref{kke} is
uniquely represented in the form
\begin{equation}\label{jyi}
{\cal T}=\sum_{i_1,\dots, i_{m+n}}a_{i_1\dots i_{m+n}}
f_{1i_1}\otimes\dots\otimes f_{mi_m}\otimes f_{m+1,i_{m+1}}^*\otimes\dots\otimes
f_{m+n,i_{m+n}}^*
\end{equation}
and can be given by \emph{its array $\u A=
[a_{i_1\dots i_{m+n}}]$ over $\mathbb
F$.}

Choose a new basis $g_{\alpha
1},\dots,g_{\alpha d_{\alpha }}$ in each $V_{\alpha }$, and let
$
S_{\alpha }:=[s_{\alpha ij}]$
be the change of basis matrix (i.e., $g_{\alpha j}=\sum_{i,j} s_{\alpha ij}f_{\alpha i}
$) for each
${\alpha}=1,\dots,m+n$. Denote by $\u B$ the array of $\cal T$ in the new set of bases.
Then
\begin{equation*}\label{jkl}
(S_1^{-T},\dots,S_m^{-T},S_{m+1}
,\dots,S_{m+n}):
\u A\too \u B,
\end{equation*}
which allows us to reformulate Definition \ref{aal} as follows.

\begin{definition}[see {\cite[Sec. 4]{ser_sur}}]
Let $G$ be a directed bipartite
graph  with ${\T}=\{t_1,\dots,t_p\}$
and $\V=\{1,\dots,q\}$.

\begin{itemize}
  \item A \emph{representation}
      $(\mathcal T,V)$ of $G$ is
      given by assigning  a finite
      dimensional vector space
      $V_v$ over $\F$ to each
      $v\in \V$ and  a tensor ${\cal
      T}_{\alpha }$ to each
      $t_{\alpha }\in \T$ so that if
\begin{equation*}\label{liw}
\begin{split}
\xymatrix@R=-3pt@C=2cm
{
&v_1\\
&\\
&v_2\\
t_{\alpha }\ar@/^/@{-}[ruuu]^(0.6){\lambda_1}
\ar@{-}[ru]^(0.6){\lambda_2}
\ar@{}[r]
\ar@/_/@{-}[rdd]^(0.6)
{\lambda_k}
&\text{\raisebox{-3.5pt}{$\vdots$}}\\
&\\
&v_k\\
}
\end{split}
\end{equation*}
are all arrows in a vertex $t_{\alpha }$
(each line $\lin$ is
$\longrightarrow$ or
$\longleftarrow$; some of
$v_1,\dots,v_k\in \T$ may
coincide), then
\begin{equation*}\label{fei}
{\cal T}_{\alpha }\in V_{v_1}^{\varepsilon_1} \otimes\dots\otimes  V_{v_p}^{\varepsilon_p},\qquad
\varepsilon_i:=\left\{
                \begin{array}{ll}
                  1 & \hbox{if $\lambda _i: t_{\alpha} \longleftarrow  v_i$,} \\
                  * & \hbox{if $\lambda _i: t_{\alpha}\longrightarrow  v_i$.}
                \end{array}
              \right.
\end{equation*}

  \item Two representations
      $(\mathcal T,V)$ and
      $(\mathcal T',V')$ of $G$ are
      \emph{isomorphic} if there
      exists a system of linear
      bijections $\varphi_v:V_v\to
      V'_v$ ($v\in \V$) that together with
   the contragredient
      bijections
      $\varphi_v^{*-1}:V_v^*\to
      V_v^{\prime *}$ transform
      $\mathcal
      T_{1},\dots,\mathcal
      T_{p}$ to $\mathcal
      T'_{1},\dots,\mathcal
      T'_{p}$.
\end{itemize}
\end{definition}

\begin{example}
A special case of \eqref{jyi} is a $(0,2)$-tensor ${\cal T}=\sum_{i,j}a_{ij}
f_{1i}^*\otimes f_{2j}^*\in V_{1}^*\otimes V_{2}^*$. It can be identified with the
bilinear form
\[
{\cal T}:V_1\times V_2\to \F,\qquad (v_1,v_2)\mapsto\sum_{i,j}a_{ij}
f_{1i}^*(v_1)
f_{2j}^*(v_2).
\]
      A $(1,1)$-tensor
        ${\cal
        T}=\sum_{i,j}a_{ij}
        f_{1i}\otimes f_{2j}^*\in
        V_{1}\otimes V_{2}^*$ can
        be identified with the
        linear mapping
\[
{\cal T}:V_2\to V_1,\qquad v_2\mapsto\sum_{i,j}a_{ij}
f_{1i}
f_{2j}^*(v_2).
\]
The arrays $[a_{ij}]$ of these tensors are the matrices of these bilinear forms and linear mappings. The systems of tensors of order 2 are studied in \cite{hor-ser_form,hor-ser,ser_izv,ser_sur,ser_iso} as representations of graphs with undirected, directed, and double directed ($\longleftrightarrow$) edges that are assigned, respectively, by (0,2)-, (1,1)-, and (2,0)-tensors on the vector spaces assigned to the corresponding vertices. The problem of classifying such representations (i.e., of arbitrary systems of bilinear forms, linear mappings, and bilinear forms on dual spaces) was reduced in \cite{ser_izv} to the problem of classifying representations of quivers.
\end{example}

\begin{example}[see {\cite[Sec. 4]{ser_sur}}]\label{err}
Each representation
\begin{equation}\label{mmr}
\begin{split}
\xymatrix@C=95pt@R=5pt{
 {T_1}\ar@/^/@<0.3ex>[dr]&
  \\{T_2}\ar@/_/@<-0.3ex>[r]
  &{V_1}\ar@/_/@<-0.3ex>[l]
  \ar[l]\ar@/_/[ld]
 \\{T_3}\ar@/_/@<-0.3ex>[r]
 &{V_2}\ar[l]}
  \end{split}
\end{equation}
of the
bipartite directed graph \eqref{hle1}
consists of vector spaces $V_1$ and
$V_2$ and tensors
\begin{equation*}%\label{jts}
T_1\in V_1,\qquad
T_2\in V_1^*\otimes
V_1^*\otimes
V_1,\qquad T_3\in
V_1^*\otimes
V_2^*\otimes V_2.
\end{equation*}
Each pair $(\Lambda, M)$, consisting of
a finite dimensional algebra $\Lambda$ with unit $1_{\Lambda}$
over a field $\mathbb F$ and a $\Lambda$-module $M$, defines the representation
\[
\xymatrix@C=95pt@R=5pt{
 {T_1=1_{\Lambda}}\ar@/^/@<0.3ex>[dr]&
  \\{T_2}\ar@/_/@<-0.3ex>[r]
  &{\Lambda_{\mathbb F}}\ar@/_/@<-0.3ex>[l]
  \ar[l]\ar@/_/[ld]
 \\{T_3}\ar@/_/@<-0.3ex>[r]
 &{M_{\mathbb F}}\ar[l]
  }
\]
of \eqref{hle1}, in which
\[
T_2=\sum_i a_{i1}^*\otimes
a_{i2}^*\otimes
a_{i3},\qquad
T_3=\sum_i a_{i}^*\otimes
m_{i1}^*\otimes m_{i2}
\]
(all $a_{ij}\in\Lambda$ and $m_{ij}\in M$) are the tensors that define the multiplications in $\Lambda$ and $M$:
\[
(a',a'')\mapsto
\sum_ia_{i1}^*(a')
a_{i2}^*(a'')
a_{i3},\qquad
(a,m)\mapsto
\sum_ia_{i1}^*(a)
m_{i1}^*(m)m_{i2}.
\]

Note that the identities (additivity,
distributivity, $\ldots$\:) that must satisfy the operations in $\Lambda$ and $M$ can
be written via tensor contractions in such a way that each representation \eqref{mmr} satisfying these relations defines a pair consisting of a finite dimensional algebra and its module. This leads to the theory of representations of bipartite directed graphs with relations that generalizes the theory of representations of quivers with relations.
\end{example}

\section*{Acknowledgements}

The authors are grateful to the referee  for the useful
suggestions.
V.~Futorny  was
supported by  CNPq grant 301320/2013-6 and by
FAPESP grant 2014/09310-5. J.A.~Grochow was supported by NSF grant DMS-1750319. The work
of V.V.~Sergeichuk was supported by
FAPESP grant 2015/05864-9 and was done during his
visits to the Santa Fe Institute (the
workshop ``Wildness in computer
science, physics, and mathematics'') and to the University of
S\~ao Paulo.


\begin{thebibliography}{99}
%\end{thebibliography}

 \bibitem{AGLOW}
 Z. Allen-Zhu, A. Garg, Y. Li, R. Oliveira, A.
Wigderson, Operator scaling via geodesically convex optimization, invariant theory and polynomial identity testing, in: Proceedings of the
50th Annual ACM SIGACT Symposium on Theory of Computing, pages 172--181, New York, 2018; doi: 10.1145/3188745.3188942


\bibitem{bel-ser_compl} G.R. Belitskii,
    V.V. Sergeichuk, Complexity of
    matrix problems, Linear Algebra
    Appl. 361 (2003) 203--222.

\bibitem{BMW} P.A. Brooksbank, J. Maglione,  J.B. Wilson, A fast
isomorphism test for groups whose Lie algebra has genus 2, J. Algebra
473 (2017) 545--590.

\bibitem{cvi} P. Cvitanovi\'{c},  Group
    Theory. Birdtracks, Lie's, and
    Exceptional Groups, Princeton
    University Press, 2008.

\bibitem{debora} D.D. de Oliveira, R.A. Horn, T. Klimchuk, V.V. Sergeichuk,
Remarks on the classification of a pair of commuting semilinear operators, Linear Algebra
Appl. 436 (2012) 3362--3372.

\bibitem{dro}
Ju.A. Drozd, Tame and wild matrix problems, Lecture Notes in Math. 832 (1980) 242--258.

\bibitem{gab} P. Gabriel,
    Unzerlegbare Darstellungen I,
    Manuscripta Math. 6 (1972) 71--103.

\bibitem{gnrs}
P.  Gabriel,  L.A.  Nazarova,  A.V.  Roiter,  V.V. Sergeichuk,
D.~Vossieck, Tame  and  wild  subspace  problems, Ukrainian Math. J. 45 (no. 3) (1993) 335--372.

\bibitem{gab-roi} P.  Gabriel,   A.V.
    Roiter, Representations  of
    Finite-Dimensional Algebras,
    Springer-Verlag, 1997.

\bibitem{gel-pon} I.M. Gelfand, V.A.
    Ponomarev, Remarks on the
    classification of a pair of
    commuting linear transformations in
    a finite dimensional vector space,
    Functional Anal. Appl. 3 (1969)
    325--326.

\bibitem{grochowPhD} J.A. Grochow, Symmetry and Equivalence Relations in Classical
and Geometric Complexity Theory, PhD thesis, University of Chicago,
 2012.

\bibitem{haz-kir} M.
    Hazewinkel, N. Gubareni, V.V.
    Kirichenko, Algebras, Rings
    and Modules, Vol.~2, Springer,
    2007.

\bibitem{hor-ser_form} R.A. Horn, V.V.
    Sergeichuk, Canonical matrices of
    bilinear and sesquilinear forms,
    Linear Algebra Appl. 428 (2008)
    193--223.

\bibitem{hor-ser} R.A. Horn, V.V.
    Sergeichuk, Representations of
    quivers and mixed graphs, Chapter
    34 in: L. Hogben (Ed.), Handbook of
    Linear Algebra, 2nd ed., CRC Press,
    2014.

\bibitem{IQS2} G. Ivanyos, Y. Qiao, K.V. Subrahmanyam, Noncommutative Edmonds' problem and matrix semi-invariants, Computational Complexity 26 (2017) 717--763; doi: 10.1007/s00037-016-0143-x.

\bibitem{IQS1} G. Ivanyos, Y. Qiao, K.V. Subrahmanyam, Constructive
non-commutative rank computation is in deterministic polynomial time,
Computational Complexity,  2018; doi: 10.1007/s00037-018-0165-7.

\bibitem{jozsaLinden} R. Jozsa, N. Linden, On the role of entanglement in
quantum-computational speed-up, Proc. R. Soc. Lond. A,  459 (2003) 2011--2032; doi: 10.1098/rspa.2002.1097.


\bibitem{kendonMunro} V.M. Kendon, W.J. Munro, Entanglement and its role
in Shor's algorithm, Quantum Info. \& Computation, 6 (2006) 630--640;
Available from: arXiv:quant-ph/0412140.

\bibitem{LMR} J.M. Landsberg, L. Manivel,  N. Ressayre, Hypersurfaces with degenerate duals and the Geometric Complexity Theory Program, Comment. Math. Helv. 88 (2013) 469--484.

\bibitem{pen} R. Penrose, Applications
    of negative dimensional tensors,
    pp. 221--244, in: D.J.A. Welsh
    (Ed.), Combinatorial Mathematics
    and its Applications, Academic
    Press, London, 1971.

\bibitem{ser_izv} V.V. Sergeichuk,
    Classification problems  for
    systems of forms and linear
    mappings, Math. USSR-Izv. 31 (no.
    3) (1988) 481--501.

\bibitem{ser} V.V. Sergeichuk,
    Canonical matrices for linear
    matrix problems, Linear Algebra
    Appl. 317 (2000) 53--102.

\bibitem{ser_sur} V.V. Sergeichuk,
    Linearization method in
    classification problems of linear
    algebra, S\~ao Paulo J. Math. Sci. 1
    (2007) 219--240.

\bibitem{ser_iso} V.V. Sergeichuk,
    Canonical matrices of isometric
    operators on indefinite inner
    product spaces, Linear Algebra
    Appl. 428 (2008) 154--192.

\bibitem{tur} V.G. Turaev, Quantum
    Invariants of Knots and
    3-Manifolds, 2nd ed., De Gruyter,
    2010.

\bibitem{valiant} L.G. Valiant, An algebraic approach to computational complexity, in:
Proceedings of the International Congress of Mathematicians
(Warsaw, 1983), pages 1637--1643, PWN, Warsaw, 1984.

\bibitem{WGE} M. Walter, D. Gross, J. Eisert, Multi-partite entanglement (extended version of introductory book chapter), 2016, available in: arXiv:1612.02437.


\end{thebibliography}
\end{document}